\newtheorem{mythm}{Theorem}[section]
\newtheorem{mycor}[mythm]{Corollary}
\newtheorem{myrem}[mythm]{Remark}}
\newtheorem{myexam}{Example}[section]}
\newcommand{\dis}{\displaystyle}
\def\R{\mathbb R}
\def\N{\mathbb N}
\def\d{\mathrm{d}}
\def\E{\mathbb E}
\def\p{\mathbb P}
\def\e{\mathrm{e}}
\def\La{\Lambda}
\def\S{\mathcal S}
\def\wt{\widetilde}
\def\m{\mathbf{m}}
\def\var{\mathrm{var}}
\newcommand{\fin}{\hfill $\square$\par}
\newenvironment{proof}{{\noindent\it Proof.}\ }{\hfill $\square$\par}
\numberwithin{equation}{section}
\begin{document}

 \title{Comparison theorem and stability under perturbation of transition rate matrices for regime-switching processes \footnote{Supported in
			part by NNSFs of China (No. 12271397,   11831014) and National Key R\&D Program of China (No. 2022YFA1000033)}}

\author{Jinghai Shao\thanks{Center for Applied Mathematics, Tianjin University, Tianjin 300072, China. Email: shaojh@tju.edu.cn.}
}

\maketitle

\begin{abstract}
A comparison theorem for state-dependent regime-switching diffusion processes is established, which enables us to control pathwisely the evolution of the state-dependent switching component simply by Markov chains. Moreover, a sharp estimate on the  stability of Markovian regime-switching processes under the perturbation of transition rate matrices is provided. Our approach is based on the elaborate constructions of switching processes in the spirit of Skorokhod's representation theorem varying according to the problem being dealt with. In particular, this method can cope with the switching processes in an infinite state space and not necessarily being of birth-death type.  As an application, some known results on ergodicity and stability of state-dependent regime-switching processes can be improved.
\end{abstract}

\textbf{keywords} Comparison theorem, Regime-switching diffusions, Ergodicity, Perturbation theory

\textbf{AMS MSC 2010}:
60J27, 60J60, 60K37

\section{Introduction}

Stochastic processes with regime-switching have been extensively studied in many research fields due to their characterization of random changes of the environment between different regimes; see, for instance, \cite{CH,Gho,MY,NYZ,Sh15,Sh15a,SX19,YZ,Zh19} and references therein. Especially, when the switching of the environment depends on the state of the studied dynamic system, called usually state-dependent regime-switching process(for short, \textbf{RSP}), the properties of such system become much more complicated due to their intensive interaction.
The monograph \cite{YZ} has introduced various properties of the state-dependent \textbf{RSP}, which tells us that it is a very challenging task to provide easily verifiable conditions to justify the ergodicity and stability of the state-dependent \textbf{RSP}s.

In view of the relatively abundant results on state-independent \textbf{RSP}s, it is natural to simplify a state-dependent \textbf{RSP} into a state-independent one. So, one needs to establish some kind of comparison theorem for state-dependent \textbf{RSP}s. Such idea has been used in many works. For instance, Cloez and Hairer \cite{CH} used this idea to study the exponential ergodicity in the Wasserstein distance for birth-death type state-dependent \textbf{RSP}s based on an application of weak Harris' theorem. The state-dependent switching process and the constructed state-independent process constitute a coupling process. However, the constructed coupling process in \cite{CH} is not a Markovian process any longer and needs modification in applications (cf. \cite[Remark 3.10]{CH}).  Majda and Tong used the same method to study the exponential ergodicity in the setting of piecewise deterministic processes with regime-switching and applied their results to the tropical stochastic lattice models. In \cite{Sh18}, the author constructed a Markovian coupling for the birth-death type state-dependent \textbf{RSP}s, then, in \cite{SX19}, together with Xi, extended it to a general case under the condition of the existence of an order-preserving coupling for Markov chains(cf. \cite[Lemma 2.7]{SX19}). This result is not constructive, and the verification of Assumption 2.6 in \cite{SX19} is not easy in application.

Accordingly, our first purpose is to establish a comparison theorem for more general state-dependent \textbf{RSP}s, especially to get rid of the restriction of  birth-death type switching and to be applicable to switching processes in an infinitely countable state space. Our comparison theorem is in the pathwise sense. As a consequence, the corresponding results in \cite{CH,MT,Sh18,SX19} can be generalized after certain necessary modification.

For state-dependent \textbf{RSP}s, the study of Feller property and smooth dependence of initial values is of great interest. It is quite different to that of Markovian \textbf{RSP}s and diffusion processes, which has been noted in the works \cite{NYZ}, \cite{Xi08}, \cite[Chapter 2]{YZ} and references therein. For instance, in \cite[Theorem 3.1]{NYZ}, the continuous  differentiability of continuous component of \textbf{RSP}s w.r.t. the initial value in $L^p$ with $p\!\in \!(0,1)$ was proved under suitable regular conditions. Moreover, restricted to a bounded set, certain gradient estimate associated with the continuous component of \textbf{RSP}s  can be derived from \cite[Theorem 4.1]{NYZ}. Recently, together with K. Zhao, we studied in \cite{SZ21} the continuous dependence of initial value for state-dependent \textbf{RSP}s, which are solutions to certain stochastic functional differential equations with regime-switching.  As shown in \cite{SZ21}, the key point is the estimate for the following quantity
\begin{equation}\label{quan}
\Theta(t, \La,\tilde \La):=\frac 1t\int_0^t \p(\La_s\neq \tilde \La_s)\d s,\qquad \quad t>0.
\end{equation}
Moreover, it was also shown in \cite{SY} that the quantity $\Theta(t,\La,\tilde \La)$ plays a crucial role in the study of  stability of \textbf{RSP}s under the perturbation of $Q$-matrix. However, the estimate in \cite{SY,SZ21} does not work for switching processes in an infinite state space, which needs to be generalized from the viewpoint of application. Besides, the estimation of $\Theta(t,\La,\tilde\La)$ develops the classical perturbation theory for Markov chains(cf. \cite{Mi03,Mi04,Mi05,ZI94}). See more details in Section 3 below.


Our improvements in the previous two concerned problems are all based on a new observation on Skorokhod's representation theorem for the jumping processes.
The approach  of using  Skorokhod's representation theorem to study \textbf{RSP}s has been widely used in the literature; see, e.g. \cite{Gho}, \cite{NYZ}, \cite{Sh15}, \cite{SY}, \cite{YZ} etc.  The basic idea of Skorokhod's representation theorem is to represent the switching process in terms of an integral with respect to a Poisson random measure based on a sequence of constructed intervals on the real line. The length of each interval  is determined by $(q_{ij}(x))$. However, the impact of the construction and sort order of the intervals on the obtained jumping processes has been neglected in all the previous works. In this work, we shall show  that it is necessary to carry out different construction of the intervals  to solve different problems. This will be illustrated through establishing the comparison theorem and studying the stability problem under the perturbation of $Q$-matrix.

This work is organized as follows. In Section 2, we first provide the construction of the coupling process, then establish the comparison theorem for state-dependent \textbf{RSP}s. As an illustrative example, we use this comparison theorem to study the stability of state-dependent \textbf{RSP}s.
In Section 3, we also first provide the construction of the coupling process $(\La_t,\tilde \La_t)$ in \eqref{quan}, the key point of which is the construction of intervals used in Skorokhod's representation theorem. Then we provide an estimate of \eqref{quan} which improves the one given in \cite{SY}.

\section{Comparison theorem for state-dependent \textbf{RSP}s}

Let $\S=\{1,2,\ldots,N\}$ with $2\leq N\leq \infty$. So, $\S$ is allowed to be an infinitely countable state space by taking $N=\infty$. Consider
\begin{equation}\label{a-1}
\d X_t=b(X_t,\La_t)\d t+\sigma(X_t,\La_t)\d B_t,
\end{equation} where $b:\R^d\times\S\to \R^d$, $\sigma:\R^d\times\S\to \R^{d\times d}$ satisfying suitable conditions and $(B_t)$ is a $d$-dimensional Brownian motion. Here $(\La_t)$ is a jumping process on $\S$ satisfying
\begin{equation}\label{a-2}
\p\big(\La_{t+\delta}=j|\La_t=i, X_t=x\big)=\begin{cases}
  q_{ij}(x)\delta +o(\delta), &\text{if $i\neq j$},\\
  1+q_{ii}(x)\delta +o(\delta), &\text{if $i=j$,}
\end{cases}
\end{equation} provided $\delta>0$. When $(q_{ij}(x))$ does not depend on $x$, $(X_t,\La_t)$ is called a state-independent \textbf{RSP} or a Markovian \textbf{RSP}. Meanwhile, when $(q_{ij}(x))$ does depend on $x$, $(X_t,\La_t)$ is called a state-dependent \textbf{RSP}, which is used to model the phenomenon that the dynamic system $(X_t)$ can impact the change rate of the random environment in applications. There is an intensive interaction between $(X_t)$ and $(\La_t)$ for the state-dependent \textbf{RSP}s, and hence it is quite desirable to develop a stochastic comparison theorem to simplify such system. Since $(X_t)$ can be simplified, if necessary, by using classical comparison theorem for diffusion processes(cf. \cite{IW77,IW}) or for L\'evy process (cf. \cite{Wa}) performed separatively in each fixed regime $i$, hence the key  point is to control the jumping component $(\La_t)$ whose transition rates vary with $(X_t)$.

\begin{mythm}\label{main-1}
Assume that SDEs \eqref{a-1} and \eqref{a-2} admit a solution $(X_t,\La_t)$ for any initial value $(X_0,\La_0)=(x,i)\in\R^d\times \S$. Suppose that $(q_{ij}(x))$ is conservative for every $x\in \R^d$ and satisfies
\begin{itemize}
  \item[$\mathrm{(Q1)}$] $K_0:=\sup_{x\in\R^d}\sup_{i\in\S} |q_{ii}(x)|<\infty$.
  \item[$\mathrm{(Q2)}$]   $\forall\, i\!\in\! \S$, there is a   $c_i \!\in \!\N$ such that $q_{ij}(x)=0$,    $\forall\,j\in \S$ with $|j\!-\! i|\!>c_i $, $\forall\, x\in \R^d$.
\end{itemize}
Define
\begin{equation*}
  q_{ij}^\ast=\begin{cases}
    \sup\limits_{x\in\R^d}\max\limits_{j<\ell\leq i}q_{\ell j}(x),\  &j<i\\
    \inf\limits_{x\in\R^d}\min\limits_{\ell\leq i} q_{\ell j}(x),\  &j>i\\
    -\sum_{i\neq j}q_{ij}^\ast, &j=i
  \end{cases}\ \  \text{and}\quad
  \bar q_{ij}=\begin{cases}
    \inf\limits_{x\in\R^d}\min\limits_{j<\ell\leq i} q_{\ell j}(x),\  &j<i\\
    \sup\limits_{x\in\R^d} \max\limits_{\ell\leq i} q_{\ell j}(x),\  &j>i\\
    -\sum_{j\neq i}\bar q_{ij},  &j=i
  \end{cases}.
\end{equation*}
Then there exist two continuous-time Markov chains $(\La^\ast_t)$ and $(\bar \La_t)$ on $\S$ with the transition rate matrix $(q_{ij}^\ast)$ and $(\bar q_{ij})$ respectively such that
\begin{equation}\label{a-3}
\p(\La_t^\ast\leq \La_t\leq \bar \La_t,\ \forall\,t\geq 0)=1.
\end{equation}
\end{mythm}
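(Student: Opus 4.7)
The plan is to realize the three processes $(\La^\ast_t,\La_t,\bar\La_t)$ on a common probability space via Skorokhod-type representations driven by a shared Poisson random measure $N(\d s,\d u)$ on $[0,\infty)\times[0,M]$ with Lebesgue intensity. Condition $(Q1)$ bounds the exit rate from every state of $\La$ and, after a row-by-row check using $(Q2)$, likewise of $\La^\ast$ and $\bar\La$, so each row's jump intervals fit inside a finite window $[0,M]$. The interval lengths are dictated by the target rate matrices: $|\Delta^\ast_j|=q^\ast_{i^\ast,j}$, $|\Delta_j|=q_{i,j}(x)$, $|\bar\Delta_j|=\bar q_{\bar i,j}$. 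This guarantees automatically that the marginals of $\La^\ast_t$ and $\bar\La_t$ are Markov chains with the stated rate matrices, regardless of the interval positions; hence the pathwise ordering \eqref{a-3} must be enforced purely through the \emph{positions} of the intervals inside $[0,M]$, which are allowed to depend on the current triple $(\La^\ast_t,\La_t,\bar\La_t)$ and on $X_t$.

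For each admissible triple $(i^\ast,i,\bar i)$ with $i^\ast\le i\le \bar i$ and each $x\in\R^d$, I would lay out the intervals in a nested pattern. Downward-destination intervals (ordered by destination $j$, smaller $j$ to the left) occupy the left portion of $[0,M]$, and I impose the nestings
\begin{equation*}
\bar\Delta_j(\bar i)\subseteq \bigcup_{j'\le j}\Delta_{j'}(i,x)\subseteq \bigcup_{j'\le j}\Delta^\ast_{j'}(i^\ast)\quad(j<i^\ast).
\end{equation*}
Upward-destination intervals are laid out symmetrically on the right end of $[0,M]$ with the reversed inclusions (lower chain inside middle inside upper, with the direction of nesting flipped). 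Under this layout, every Poisson atom that forces $\La$ or $\bar\La$ down to a state $j$ simultaneously drags the chain below it to a state at most $j$, and dually for upward jumps; so the order \eqref{a-3} is preserved at every jump. Between jumps the processes are constant, and the jump times of the triple are locally finite by $(Q1)$, so a straightforward induction on jumps yields \eqref{a-3}.

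Feasibility of this nested layout reduces to the tail-sum inequalities
\begin{equation*}
\sum_{j'\le j}\bar q_{\bar i,j'}\le \sum_{j'\le j}q_{i,j'}(x)\le \sum_{j'\le j}q^\ast_{i^\ast,j'}\quad(j<i^\ast),
\end{equation*}
together with their reversed counterparts on the upward side. These follow, case-by-case, from the $\sup\max$ and $\inf\min$ structure of the definitions of $q^\ast_{ij}$ and $\bar q_{ij}$ over the ranges $(j,i]$ and $\le i$, combined with monotonicity of these bounds in the first index and with $(Q2)$ (which truncates every sum to finitely many non-zero terms and so guarantees finiteness of $M$). Having fixed the layout, I would define $\La^\ast_t,\La_t,\bar\La_t$ as the unique solutions of the corresponding Poisson-driven jump equations with destination functions read off from the intervals.

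The main obstacle I anticipate is the intermediate regime where $i^\ast<i<\bar i$ strictly and the destination $j$ lies between them: there neither a purely downward nor a purely upward classification applies to all three chains simultaneously, and the left- and right-hand nestings must be threaded together consistently. The careful choice of the ranges $(j,\ell]\cap[\cdot,i]$ over which the $\sup\max$ and $\inf\min$ are taken in the definitions of $q^\ast_{ij}$ and $\bar q_{ij}$ is precisely what is designed to make this case analysis close, and the proof must spell out the matching explicitly, likely by splitting the arrangement into sub-blocks indexed by the mutual positions of $j$ relative to $i^\ast,i,\bar i$ and verifying the partial-sum inequalities on each sub-block.
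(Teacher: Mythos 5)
Your construction is essentially the paper's: a single Poisson random measure drives all three chains through Skorokhod-type jump equations, the destination intervals have lengths equal to the corresponding rates, downward and upward destinations are segregated into two blocks, and the pathwise order is reduced to nested partial-sum inclusions verified by induction over the (locally finite) jump times. Two points, however, are left genuinely open in your write-up. First, the whole content of the theorem sits in the tail-sum inequalities that you state and then dispatch with ``follow, case-by-case, from the $\sup\max$ and $\inf\min$ structure \dots combined with monotonicity of these bounds in the first index.'' For the pair $(\La,\bar\La)$ this is fine and is exactly \eqref{g-7}--\eqref{g-8}, which the paper derives from the pointwise bounds $\bar q_{kj}\ge q_{\ell j}(x)$ for $\ell\le k<j$ and $\bar q_{kj}\le q_{\ell j}(x)$ for $j<\ell\le k$. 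But for the lower chain your inequality $\sum_{j'\le j}q_{ij'}(x)\le\sum_{j'\le j}q^\ast_{i^\ast j'}$ with $j<i^\ast\le i$ requires $q_{ij'}(x)\le q^\ast_{i^\ast j'}$, and $q^\ast_{i^\ast j'}$ is a $\sup\max$ over $\ell\in(j',i^\ast]$, a range that does not contain $i$ when $i>i^\ast$; the monotonicity $q^\ast_{i^\ast j'}\le q^\ast_{ij'}$ bounds the wrong side. So this step does not close as you argue it (indeed, as the ranges of the extrema for $q^\ast$ are printed in the statement they are not even consistent with Remark 2.2(i); the verification must be done against the corrected ranges, mirroring those used for $\bar q$), and it is precisely the step that carries the theorem.

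Second, the ``main obstacle'' you anticipate --- threading the three chains together when a destination lies strictly between them --- dissolves under the paper's geometry: every downward-destination interval of every row lies on one half-line anchored at $0$ and every upward-destination interval on the other (with the upward block ordered so that the farthest destination $i+c_i$ sits adjacent to the origin, which is where $\mathrm{(Q2)}$ is used), so a single Poisson mark can never trigger a downward jump of one chain and an upward jump of another. Order preservation is then checked for the two pairs $(\La^\ast,\La)$ and $(\La,\bar\La)$ separately, four short cases each, with no joint bookkeeping of the triple. In particular there is no need to let the interval layout depend on the current joint state: a layout depending only on each chain's own current row (and on $X_t$ for the middle chain) realizes all the required inclusions simultaneously, and it is this autonomy that makes it immediate (via It\^o's formula and uniqueness of the forward equation under $\mathrm{(Q1)}$) that $\La^\ast$ and $\bar\La$ are Markov chains with the stated generators.
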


\begin{myrem}
  \begin{itemize}
  \item[$\mathrm{(i)}$] For birth-death type $(q_{ij}(x))$, i.e. $q_{ij}=0$ for $|i-j|\geq 2$, we have
  \begin{equation*}
  \begin{split}
    q_{i(i-1)}^\ast&=\sup_{x\in\R^d}q_{i(i-1)}(x),\
    q_{i(i+1)}^\ast =\inf_{x\in\R^d} q_{i(i+1)}(x),\\
    \bar q_{i(i-1)}&=    \inf_{x\in\R^d} q_{i(i-1)}(x),\
    \bar q_{i(i+1)}=\sup_{x\in\R^d} q_{i(i+1)}(x).
  \end{split}
  \end{equation*}
  This coincides with the Markov chain constructed in \cite{Sh18}.
  \item[$\mathrm{(ii)}$] There are many works to investigate the existence of solution to \eqref{a-1} and \eqref{a-2}; see, \cite{YZ} under Lipschitzian condition, \cite{Sh15} under non-Lipschitzian condition, \cite{Zh19} under integrable condition.
  \item[$\mathrm{(iii)}$]
      Assumption $\mathrm{(Q1)}$ ensures that there exists a unique Markov chain $(\La_t^\ast)_{t\geq 0}$ $((\bar \La_t)_{t\geq 0} )$ associated with $(q_{ij}^\ast)$ $($$(\bar q_{ij})$ respectively$)$; see, e.g.
      \cite[Corollary 2.24]{Chen}.
  \end{itemize}
\end{myrem}

As mentioned in the introduction, this comparison theorem can help us to generalize the corresponding results in \cite{CH}, \cite{MT}, \cite{Sh18}, \cite{SX19}. Precisely, one can remove Assumption 3.1 of birth-death type restriction in \cite{CH} and generalizes \cite[Theorems 3.3, 3.4]{CH} there. Also,  the exponential convergence results, \cite[Theorems 2.1, 2.3]{MT}, for two stochastic lattice models for moist tropical convection in climate science studied can be generalized by considering more general jumping processes besides birth-death processes used in \cite{MT}.

As an illustrative example, we
apply Theorem \ref{main-1} to investigate the stability of  state-dependent \textbf{RSP}s.
The stability of stochastic processes with regime-switching has been studied in many works. We refer the readers to the monographs \cite{MY,YZ} for surveys on this topic, and also to \cite{SX} for some recent results on the stability of state-dependent \textbf{RSP}s based on $M$-matrix theory, Perron-Frobenius theorem and the Friedholm alternative.

\begin{mythm}\label{app-1}
Let $(X_t^{x,i}, \La_t^{x,i})$ be the solution to \eqref{a-1} and \eqref{a-2} with initial value $(x,i)$. Assume that the conditions of Theorem \ref{main-1} hold and $\S$ is a finite state space. Suppose that there exist a function $\rho\in C^2(\R^d)$, constants $\beta_i\in\R$ for $i\in\S$, constants $p,\,\tilde c>0$ such that
\begin{equation}\label{g-9}
\begin{split}
\mathscr{L}^{(i)}\rho(x)
 \leq \beta_i \rho (x),\qquad \rho(x)\geq \tilde c|x|^p,\quad x\in\R^d,\ i\in \S,
\end{split}
\end{equation} where $\mathscr{L}^{(i)}\rho(x)\!=\! \sum\limits_{k=1}^d\!b_k(x,i)\partial_k\rho(x)\! +\frac 12\sum\limits_{k,l=1}^d\!\! a_{kl}(x,i)\partial_k\partial_l\rho(x)$, $a_{kl}(x,i)\!=\!\sum\limits_{j=1}^d\!  \sigma_{kj}(x,i)\sigma_{lj}(x,i)$. Through reordering $\S$, without loss of the generality we may assume that $(\beta_i)_{i\in\S}$ is  nondecreasing. Let $(\bar q_{ij})$ be defined as in Theorem \ref{main-1}. Assume  that $(\bar q_{ij})$ is irreducible, and
\begin{equation}\label{g-10}
\sum_{i\in\S}\bar\mu_i \beta_i<0
\end{equation}
where $(\bar\mu_i)_{i\in\S}$ denotes the unique invariant measure of $(\bar q_{ij})$.
Then there exists $p'\in (0,p]$ such that
\begin{equation}\label{g-11}
\lim_{t\to \infty} \E|X_t^{x,i}|^{p'}=0,\quad \quad x\in\R^d,\,i\in\S.
\end{equation}
\end{mythm}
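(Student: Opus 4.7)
The plan is to combine the pathwise domination $\La_t\le\bar\La_t$ supplied by Theorem~\ref{main-1} with a multiplicative It\^o--Lyapunov functional, and then push the resulting bound through H\"older's inequality using Perron--Frobenius theory for the tilted generator on the finite space $\S$.

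First I would realise $(X_t^{x,i},\La_t^{x,i})$ and the comparison chain $(\bar\La_t)$ of Theorem~\ref{main-1} on one probability space so that $\La_t^{x,i}\le\bar\La_t$ for every $t\ge 0$ almost surely. Since $(\beta_i)_{i\in\S}$ is nondecreasing in the order on $\S$ used in Theorem~\ref{main-1}, this passes to $\beta_{\La_t^{x,i}}\le\beta_{\bar\La_t}$, hence $A_t\le\bar A_t$ a.s., where $A_t:=\int_0^t\beta_{\La_s^{x,i}}\d s$ and $\bar A_t:=\int_0^t\beta_{\bar\La_s}\d s$. Applying It\^o's formula to $Y_t:=\rho(X_t^{x,i})\e^{-A_t}$, noting that $X_t$ has continuous paths while $A_t$ is absolutely continuous, and using \eqref{g-9}, I get
\begin{equation*}
\d Y_t=\e^{-A_t}\bigl(\mathscr{L}^{(\La_t^{x,i})}\rho(X_t^{x,i})-\beta_{\La_t^{x,i}}\rho(X_t^{x,i})\bigr)\d t+\e^{-A_t}\d M_t\le \e^{-A_t}\d M_t,
\end{equation*}
where $M_t=\int_0^t\nabla\rho(X_s^{x,i})^\top\sigma(X_s^{x,i},\La_s^{x,i})\d B_s$ is a local martingale. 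Since $\rho\ge0$, $Y_t$ is a nonnegative local supermartingale; a standard localisation by $\tau_n:=\inf\{t\ge0:|X_t^{x,i}|\ge n\}$ together with Fatou gives $\E[\rho(X_t^{x,i})\e^{-A_t}]\le\rho(x)$, and $A_t\le\bar A_t$ strengthens this to
\begin{equation*}
\E\bigl[\rho(X_t^{x,i})\e^{-\bar A_t}\bigr]\le \rho(x),\qquad t\ge0.
\end{equation*}

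Next, for $\alpha\in(0,1)$ with $\theta:=\alpha/(1-\alpha)$, H\"older's inequality with conjugate exponents $1/\alpha$ and $1/(1-\alpha)$ applied to $\rho(X_t^{x,i})^{\alpha}=(\rho(X_t^{x,i})\e^{-\bar A_t})^{\alpha}\e^{\alpha\bar A_t}$ yields
\begin{equation*}
\E \rho(X_t^{x,i})^{\alpha}\le \rho(x)^{\alpha}\bigl(\E\e^{\theta\bar A_t}\bigr)^{1-\alpha}.
\end{equation*}
Since $\S$ is finite and $(\bar q_{ij})$ is irreducible, the Feynman--Kac representation combined with Perron--Frobenius shows that the principal eigenvalue $\lambda(\theta)$ of the tilted generator $\bar Q+\theta\,\mathrm{diag}(\beta_1,\ldots,\beta_N)$ is simple and real-analytic near $0$, with $\lambda(0)=0$ and $\E\e^{\theta\bar A_t}\le C_\theta\e^{\lambda(\theta)t}$. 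First-order perturbation theory applied to the simple eigenvalue $0$ of $\bar Q$, with left eigenvector $(\bar\mu_i)$ and right eigenvector $\mathbf 1$, gives $\lambda'(0)=\sum_{i\in\S}\bar\mu_i\beta_i$, which is $<0$ by \eqref{g-10}. Hence $\lambda(\theta)<0$ for all sufficiently small $\theta>0$; choosing $p'\in(0,p]$ small enough that $\alpha=p'/p$ forces $\theta=\alpha/(1-\alpha)$ into this range gives $\E\rho(X_t^{x,i})^{p'/p}\to 0$ exponentially, and $\rho(x)\ge \tilde c|x|^p$ then yields \eqref{g-11}.

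The main obstacle is the Perron--Frobenius/perturbation step: one must justify analyticity of $\lambda(\theta)$ near $0$ and identify $\lambda'(0)=\sum_i\bar\mu_i\beta_i$ in closed form. The simplicity of the eigenvalue $0$ of $\bar Q$, guaranteed by irreducibility on the finite set $\S$, brings this within Kato's regular perturbation framework and gives the Hellmann--Feynman identity $\lambda'(0)=\bar\mu^\top\mathrm{diag}(\beta)\mathbf 1$. A secondary point requiring care is the supermartingale inequality $\E Y_t\le Y_0$, handled by the localising sequence $\tau_n$ together with nonnegativity of $Y_t$ and the boundedness of $\bar A_t$ that follows from the finiteness of $\S$; non-explosion of $X_t$ is in turn ensured by the Lyapunov inequality \eqref{g-9}.
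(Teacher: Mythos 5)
Your proposal is correct, and it follows the same overall architecture as the paper: dominate $\La_t$ pathwise by $\bar\La_t$ via Theorem~\ref{main-1}, reduce the problem to the exponential functional $\E\e^{\theta\int_0^t\beta_{\bar\La_s}\d s}$ of the dominating Markov chain, and kill that functional using the averaged negativity condition \eqref{g-10}. The three individual steps are, however, each executed differently. (a) Where you obtain $\E[\rho(X_t)\e^{-\int_0^t\beta_{\La_s}\d s}]\le\rho(x)$ from a nonnegative local supermartingale $Y_t=\rho(X_t)\e^{-A_t}$ plus localisation and Fatou, the paper instead conditions on the driving Poisson random measure $\mathcal{N}_{\mathbf p}$ and runs a recursion over its jump times $\zeta_n$ (on each interval $[\zeta_{n-1},\zeta_n)$ both $\La$ and $\bar\La$ are frozen), arriving at the conditional bound $\E^{\mathcal{N}_{\mathbf p}}[\rho(X_t)]\le\e^{\int_0^t\beta_{\bar\La_s}\d s}\rho(x)$; your route avoids the independence/conditioning bookkeeping, while the paper's keeps the exponential on the favourable side of the expectation from the start. (b) This difference propagates to the power step: the paper applies conditional Jensen to get $\E[\rho^{p'}(X_t)]\le\E[\e^{p'\int_0^t\beta_{\bar\La_s}\d s}]\rho^{p'}(x)$, i.e.\ the tilting parameter is $p'$ itself, whereas your H\"older split with exponents $1/\alpha$, $1/(1-\alpha)$ produces the parameter $\theta=\alpha/(1-\alpha)$ and an extra outer power $(1-\alpha)$; both give exponential decay once the tilted eigenvalue is negative for small parameter, so nothing is lost. (c) For the final exponential-moment estimate the paper simply cites \cite[Propositions 4.1, 4.2]{Bar}, while you re-derive it via Feynman--Kac, Perron--Frobenius for the irreducible matrix $\bar Q+\theta\,\mathrm{diag}(\beta)$ on the finite space $\S$, and Kato perturbation giving $\lambda'(0)=\sum_i\bar\mu_i\beta_i<0$ --- this is exactly the content of the cited result, so your argument is a self-contained replacement. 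One small point worth making explicit in a write-up: the monotonicity transfer $\beta_{\La_t}\le\beta_{\bar\La_t}$ uses both $\La_t\le\bar\La_t$ and the standing reordering of $\S$ that makes $(\beta_i)$ nondecreasing; you state this, and it is the only place where the hypothesis on the ordering enters.
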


\begin{myrem}
In Theorem \ref{app-1}, via condition \eqref{g-9}, we characterize the stability property of the process $(X_t)$ at each fixed state $i\in\S$ by a constant $\beta_i\in\R$, which is measured under a common Lyapunov  type function $\rho$. Then, under the help of the auxiliary Markov chain $(\bar q_{ij})$, the average condition \eqref{g-10} yields the stability of $(X_t)$.
\end{myrem}

Next, we provide an example to illustrate the application of Theorem \ref{app-1}.
\begin{myexam}\label{exam-1}
Consider
\[\d X_t=b_{\La_t}X_t\d t+\sigma_{\La_t} X_t \d B_t,\]
where $(\La_t)$ is a jumping process on $\S=\{1,2,3\}$ with the state-dependent transition rate matrix $(q_{ij}(x))_{i,j\in\S}$ given by
\[q_{ij}(x)=1+|i-j|(x^2\wedge 1),\quad i,j\in\S, \ x\in\R.\]

Take $\rho(x)=x^2$ in \eqref{g-9} to yield  $\beta_i=2b_i+\sigma_i^2$, $i\in\S$. By virtue of  Theorem \ref{main-1}, direct calculation yields that
\[(\bar q_{ij})_{i,j\in\S}=\begin{pmatrix}
  -5 &2 &3\\ 1&-4 & 3\\ 1&1&-2
\end{pmatrix},\]
and its unique invariant probability measure is  $(\bar\mu_1,\bar\mu_2,\bar\mu_3)=(\frac 16, \frac 7{30}, \frac{3}{5})$. Then, by Theorem \ref{app-1}, if
\[\sum_{i=1}^3\bar\mu_i\beta_i<0,\]
the process $(X_t)$ is stable in the sense that
\[\lim_{t\to \infty} \E\big[ |X_t^{x,i}|^p\big]=0,\quad x\in \R,\ i\in\S,\ \text{ for some $p\in (0,2]$}.\]
\end{myexam}

\subsection{Construction of the coupling process}

In the spirit of Skorokhod \cite{Sk89} in the study of processes with rapid switching, Ghosh et al. \cite{Gho} presented a representation of Markov chain in terms of a Poisson random measure and studied the stability of \textbf{RSP}s. This kind of representation theorem was widely applied in the study of various properties of \textbf{RSP}s.

Before giving out our precise representation theorem for establishing the comparison theorem, let us recall the construction in \cite{Gho} for comparison.  When $\S=\{1,2,\ldots,N\}$ is a finite state space, let $\Delta_{ij}(x)$ be a consecutive, left-closed, right-open intervals on $[0,\infty)$, each having length $q_{ij}(x)$.
More precisely,
\begin{gather*}\Delta_{12}(x)=[0, q_{12}(x)), \ \ \ \ \ \ \ \ \quad \Delta_{13}(x)=[q_{12}(x), q_{12}(x)+q_{13}(x)),\ldots, \\ \Delta_{1N}(x)=\big[\sum_{j\neq 1} q_{1j}(x), q_1(x)\big),\ \
\Delta_{21}(x)=[q_1(x), q_1(x)+q_{21}(x)),\ldots,
\end{gather*} and so on.
Define a function
$h:\R^d\times \S\times \R\to \R$ by
\begin{equation}\label{a-5} h(x,i,z)=\sum_{j\in\S,j\neq i}(j-i) \mathbf{1}_{\Delta_{ij}(x)}(z).
\end{equation}
Then, $(\La_t)$ is a jumping process satisfying \eqref{a-2} as a solution to the following SDE:
\begin{equation}\label{a-6}
\d \La_t=\int_{[0,\infty)} h(X_t,\La_{t-}, z)\mathcal{N}(\d t,\d z),
\end{equation}
where $(X_t)$ satisfies \eqref{a-1}, $\mathcal{N}(\d t,\d z)$ is a Poisson random measure with intensity $\d t\!\times\!\d z$.

Now we shall introduce our construction of the coupling process in three steps. We assume the conditions (Q1) (Q2) hold in this section.

\noindent\textbf{Step 1}. \emph{Construction of intervals} For every fixed $x\in\R^d$ and $i,j\in \S$, we define the intervals $\Gamma_{ij}(x)$, $\Gamma_{ij}^\ast$ and $\bar\Gamma_{ij}$ in the following way. Starting from $0$, the intervals $\Gamma_{ij}(x)$ for $j<i$ are defined on the positive half line, while $\Gamma_{ij}(x)$ for $j>i$ are defined on the negative half line. Precisely,
\begin{equation}\label{a-7}
\begin{array}{ll}
&\Gamma_{i1}(x)=[0, q_{i1}(x)),  \quad \Gamma_{i2}(x)=[q_{i1}(x), q_{i1}(x)\!+\!q_{i2}(x)),  \ldots,\\
  &\Gamma_{i(i\!-\!1)}(x)=\big[\sum_{j=1}^{i-2}q_{ij}(x),\sum_{j=1}^{i-1} q_{ij}(x)\big),
\end{array}
\end{equation}
and
\begin{align}\notag
\Gamma_{i(i+c_i )}(x)&=[-q_{i(i+c_i )}(x), 0),\\\label{a-8}
 \Gamma_{i(i+c_i -1)}(x) &  =\big[-q_{i(i+c_i -1)}(x)-q_{i(i+c_i )}(x),   -q_{i(i+c_i )}(x)\big),\\ \notag
 \ldots,\ \Gamma_{i(i+1)}(x)& = \big[-\sum_{j=i+1}^{i+c_i } q_{ij}(x), -\sum_{j=i+2}^{i+c_i } q_{ij}(x)\big),
\end{align}
where $c_i$ is given in (Q2). Analogously, by replacing $q_{ij}(x)$ in \eqref{a-7} and \eqref{a-8} with $\bar q_{ij}$ and $q_{ij}^\ast$ respectively, we can define the intervals $\bar\Gamma_{ij}$ and $\Gamma_{ij}^\ast$. Here and in the sequel, we put $\Gamma_{ij}(x)=\emptyset$ if $q_{ij}(x)=0$ and $\Gamma_{ii}(x)=\emptyset$ for the convenience of notation.
Such convention applies also to intervals $\bar \Gamma_{ij}$ and $\Gamma_{ij}^\ast$.

The sort order of intervals $\Gamma_{ij}(x)$, $\bar \Gamma_{ij}$ and $\Gamma_{ij}^\ast$ according to $j>i$ or $j<i$ will play important role in the argument below. The assumption on the existence of $c_i$ is used here such that on the negative half line, the first interval starting from $0$ is associated with
the state $j\in\S$ satisfying $j=\max\{k\in \S; k>i, q_{ik}(x)>0\}
$.  The common starting point $0$ of $\Gamma_{ij}(x)$, $\bar\Gamma_{ij}$ and $\Gamma_{ij}^\ast$ for different $i\in \S$ also plays an important role in our construction of the order-preservation coupling process.

\noindent\textbf{Step 2.} \emph{Explicit construction of Poisson random measure}
Here we use the method of \cite[Chapter I, p.44]{IW} to present a concrete construction of the Poisson random measure. Denote by $\mathbf{m}(\d x)$ the Lebesgue measure over $\R$. Let
$\xi_k$, $k=1,2,\ldots$, be random variables taking values in $[-K_0,K_0]$ with
\[\p(\xi_k \in \d x)= \frac{\mathbf{m}(\d x)}{2K_0},\]
and $\tau_k$, $k=1,2,\ldots$, be non-negative random variables such that
\[\p(\tau_k>t)=\exp [-2tK_0],\quad t\geq 0.\]
Assume that all $\xi_k$, $\tau_k$, $k\geq 1$, are mutually independent.
Let $\zeta_n =\tau_1 +\tau_2 +\cdots+\tau_n $, $ n=1,2,\ldots$, and $\zeta_0 =0$,
\begin{equation}\label{g-1}
\mathcal D_{\mathbf{p}}= \bigcup_{n\geq 1}\big\{\zeta_n \big\},
\end{equation}
and
\begin{equation}\label{g-2}
\mathbf{p}(t)=\!\!\sum_{0\leq s<t}\!\!\Delta \mathbf{p}(s), \quad   \Delta \mathbf{p}(s)=0\ \text{for $s\not\in \mathcal D_{\mathbf{p}}$},\ \text{and} \ \Delta\mathbf{p}(\zeta_n )=\xi_n ,\quad  n\geq 1,
\end{equation}
where $\Delta\mathbf{p}(s):=\mathbf{p}(s) -\mathbf{p}(s-)$.
The finiteness of $K_0$ means that $\lim_{n\to \infty}\zeta_n=\infty$ a.s., that is, during each finite time period, there exists a finite number of jumps for $(\mathbf{p}(t))$.
Let
\begin{equation}\label{g-3}
\mathcal{N}_{\mathbf{p}}([0,t]\!\times\! A)=\#\{s\in \!\mathcal D_{\mathbf{p}};  0\!\leq\! s\!\leq\! t, \Delta \mathbf{p}(s)\!\in\! A\},\  t>0,\ A\!\in\! \mathscr{B}(\R).
\end{equation}
As a consequence, $\mathbf{p}(t)$ and $\mathcal{N}_{\mathbf{p}}(\d t,\d z)$ are respectively a Poisson point process and a Poisson random measure with intensity measure $\d t\,\mathbf{m}(\d x)$.
It is always assumed that $\mathbf{p}(t)$ is independent of the Brownian motion $(B_t)$ in \eqref{a-1}.

\noindent\textbf{Step 3.} \emph{Construction of coupling processes}
Define three functions $\vartheta$, $\vartheta^\ast$ and $\bar\vartheta$ as follows:
\begin{align*}
  \vartheta(x,i,z)&=\sum_{j\in\S,j\neq i}(j-i)\mathbf{1}_{\Gamma_{ij}(x)}(z),\\
  \vartheta^\ast(i,z)&=\sum_{j\in\S,j\neq i}(j-i)\mathbf{1}_{\Gamma_{ij}^\ast}(z),\\
  \bar \vartheta(i,z)&=\sum_{j\in \S,j\neq i}(j-i)\mathbf{1}_{\bar \Gamma_{ij}}(z).
\end{align*}
Then, consider the following SDEs:
\begin{align}\label{g-4}
  \d \La_t&=\int_{\R}\!\vartheta(X_t,\La_{t-},z)\mathcal{N}_{\mathbf{p}}(\d t,\d z),\\ \label{g-5}
  \d  \bar\La_t&=\int_{\R}\!\bar\vartheta(\bar\La_{t-},z)\mathcal{N}_{\mathbf{p}}(\d t,\d z),\\ \label{g-6}
  \d \La_t^\ast &=\int_{\R}\!\vartheta^\ast(\La_{t-}^\ast,z)\mathcal{N}_{\mathbf{p}}(\d t,\d z),
\end{align} and $\La_0=\bar \La_0=\La_0^\ast=i_0\in\S$. Here recall that $(X_t)$ satisfies  \eqref{a-1}.

The fact that the solution to \eqref{g-4} satisfies \eqref{a-2} can be checked directly using  the property that
\[\p(\mathcal{N}_{\mathbf{p}}((0,\delta]\times A)\geq 2)=o(\delta),\quad \delta>0.\]
Next, we verify $(\bar \La_t)$ and $(\La_t^\ast)$ given by \eqref{g-5} and \eqref{g-6} are the jumping processes associated with $(\bar q_{ij})$ and $(q_{ij}^\ast)$ respectively.
According to It\^o's formula, for any bounded measurable function $F$ on $\S$,
  \begin{align*}
    \E F(\bar\La_t)&=F(\bar \La_0)+
    \E\int_0^t\!\int_{\R} \big(F(\bar\La_{s-}+ \bar\vartheta(\bar\La_{s-},z))-F(\bar\La_{s-})\big)
    \mathcal{N}_{\mathbf{p}}(\d s,\d z)\\
    &=F(\bar \La_0)+\E\int_0^t\!\int_{\R}\! \sum_{j\in \S}\big(F(j)-F(\bar\La_{s-})\big) \mathbf{1}_{\bar\Gamma_{\bar \La_{s-}j}}(z)\d s\m(\d z)\\
    &=F(\bar \La_0)+\E\int_0^t\! \sum_{j\in\S} \bar q_{\bar\La_{s-}j}\big(F(j)-F(\bar \La_{s-})\big)\d s.
  \end{align*}
  Denoted by $\bar P_t F(i_0)=\E F(\bar\La_t)$ with $\bar\La_0=i_0$, we obtain from above integral equation that
  \begin{equation}\label{ck-1}
  \bar P_tF(i_0)=F(i_0)+\int_0^t \bar P_s\big(\bar QF\big)\d s,
  \end{equation}
  where $\bar QF(i)=\sum_{j\in \S,j\neq i} \bar q_{ij}(F(j)-F(i))$, and the corresponding differential form is
  \begin{equation}\label{ck-2}
  \frac{\d {\bar P}_t F}{\d t}= \bar P_t \bar QF.
  \end{equation}
  Due to (Q1), the Kolmogorov forward equation \eqref{ck-2} admits a unique solution, and hence $(\bar\La_t)$ is a continuous-time Markov chain with transition rate matrix $\bar Q=(\bar q_{ij})$(cf. \cite[Corollary 2.24]{Chen}). The corresponding conclusion for $(\La_t^\ast)$ can be proved by the same method.

Consequently, through the previous three steps, we have completed the construction of the desired Markov chains $(\La_t^\ast)$ and $(\bar \La_t)$ used in Theorem \ref{main-1}. \fin

\begin{myrem}
  Our constructed coupling process $(X_t, \La_t,  \La_t^\ast, \bar \La_t)$ in terms of a common Poisson random measure presents a good order relation, and is not restricted to be of birth-death type. Let us compare it with the coupling constructed in \cite{CH}(presented in the argument of \cite[Lemma 3.9]{CH}). In \cite{CH}, the constructed Markov chain $(L_t)$ and the original jumping process $(\La_t)$ will move independently from each other until the time when they meet together. During their meeting time, the coupling is designed in a way such that $\La_t\geq L_t$. After the meeting time, the two processes $\La_t$ and $L_t$ locate at  different states and then they move independently once again until the next meeting time. However, the restriction of jumping in a  birth-death type could ensure that $\La_t \geq L_t$ after the meeting time.
\end{myrem}

\subsection{Proofs of the comparison theorem and its application}
Let us first present the proof of the comparison theorem.

\noindent\textbf{Proof of Theorem \ref{main-1}}. We only give out the proof of $\La_t\leq \bar\La_t$ for all $t\geq 0$ almost surely, and the corresponding solution for $\La_t$ and $\La_t^\ast$ can be proved in the same way.

According to the representation of \eqref{g-4} and \eqref{g-5}, the processes $(\La_t)$ and $(\bar \La_t)$ have no jumps outside $\mathcal{D}_{\mathbf{p}}$. So,   we only need to prove
\begin{equation}\label{dp}
\p\big(\La_t\leq \bar \La_t,\quad t\in \mathcal{D}_{\mathbf{p}}\big)=1.
\end{equation}

By the definition of $\bar q_{ij}$, it holds that
\begin{gather*}
  \text{for $j>i$},\ \ \bar q_{ij}\geq q_{lj}(x),\ \ \forall\,1\leq l\leq i, \ \forall\, x\in \R^d;\\
  \text{for $j<i$},\ \ \bar q_{ij}\leq q_{lj}(x),\ \ \forall\, j< l\leq i, \ \forall\, x\in \R^d.
\end{gather*}
By the sort order of the intervals $\Gamma_{ij}(x)$ and $\bar \Gamma_{ij}$, it holds that, for $i<k\in \S$,
\begin{equation}\label{g-7}
\bigcup_{r\geq m}\Gamma_{ir}(x)\subset \bigcup_{r\geq m}\bar \Gamma_{kr},\quad \ \forall\,m>k,\ \forall\,x\in \R^d,
\end{equation}
and
\begin{equation}\label{g-8}
\bigcup_{r\leq m}\Gamma_{ir}(x)\supset \bigcup_{r\leq m}\bar \Gamma_{kr},\quad \ \forall\, m<i, \ \forall\,x\in\R^d.
\end{equation}

Assuming  $i=\La_{\zeta_{n-1}}\leq \bar \La_{\zeta_{n-1}}=k$ for some $n\geq 1$, we are going to show that $\La_{\zeta_{n}}\leq \bar \La_{\zeta_n}$, whose proof is divided into four cases.

\noindent (i)\ If $\La_{\zeta_n}=m\geq k$, by \eqref{g-4} and the construction of the Poisson random measure $\mathcal{N}_{\mathbf{p}}(\d t,\d z)$, one gets
\[\xi_n\in \Gamma_{im}(X_{\zeta_n}).\]
By \eqref{g-7}, this yields that
\[\xi_n\in \bigcup_{r\geq m}\bar \Gamma_{kr}.\]
Together with \eqref{g-5}, this means that $\bar\La_{\zeta_n}$ must jump into the set $\{l\in\S;\ l\geq m\}$. Whence, $\La_{\zeta_n}\leq \bar \La_{\zeta_n}$.

\noindent (ii)\ If $\La_{\zeta_n}=m$ with $i<m<k$, \eqref{g-4} implies
\[\xi_n\in \Gamma_{im}(X_{\zeta_n}),\quad \text{and hence}\ \xi_n<0.\]
So,
\[\xi_n\not\in \bigcup_{j\leq k}\bar \Gamma_{kj}\subset [0,\infty),\]
which means that $\bar\La_{\zeta_n}$ cannot jump into the set $\{j\in\S; j\leq k\}$, and hence $\La_{\zeta_n}\leq \bar \La_{\zeta_n}$.  But, if $\La_{\zeta_n}=m$ with $m\leq i$ and $\bar\La_{\zeta_n}\leq i$, this situation is studied in the next case (iii).

\noindent (iii)\ If $\La_{\zeta_n}=m$ with $m\leq i$  and $\bar\La_{\zeta_n}>i$, it holds obviously that $\La_{\zeta_n}\leq \bar\La_{\zeta_n}$. If $\bar \La_{\zeta_n}=m'\leq i$, \eqref{g-5} and \eqref{g-8} yield that
\[\xi_n\in \bar\Gamma_{km'}\subset \bigcup_{r\leq m'}\Gamma_{ir}(X_{\zeta_n}). \]
Whence, $\La_{\zeta_n}$ jumps into $\{j\in \S; j\leq m'\}$. Hence, it still holds $\La_{\zeta_n}\leq \bar \La_{\zeta_n}$.

\noindent (iv)\ If $\bar \La_{\zeta_n}=m$ with $i<m<k$, then
\[\xi_n\in \bar \Gamma_{km},\ \xi_n>0,\ \ \text{and}\ \xi_n\notin \bigcup_{j>i}\Gamma_{ij}(X_{\zeta_n}).
\] So, it holds $\La_{\zeta_n}\leq i<m=\bar \La_{\zeta_n}$.

Consequently, if $\La_{\zeta_{n-1}}\leq \bar \La_{\zeta_{n-1}}$, we have $\La_{\zeta_n}\leq \bar \La_{\zeta_n}$ for $n\geq 1$.   By induction on $n$, we prove \eqref{dp} holds and finally
\[\p\big(\La_t\leq \bar \La_t,\ t\geq 0\big)=1.\]
The proof of Theorem \ref{main-1} is complete. \fin

\noindent \textbf{Proof of Theorem \ref{app-1}}
  For each $i\in\S$, denote by $(X_t^{(i)})$ the solution to the SDE
  \[\d X_t^{(i)}=b(X_t^{(i)}, i)\d t+\sigma(X_t^{(i)},i)\d B_t,\quad X_0^{(i)}=x,
  \] and $(P_t^{(i)})$ its associated semigroup.
  According to It\^o's formula and Gronwall's inequality, it follows from \eqref{g-9} that
  \[P_t^{(i)}\rho (x)=\E\rho (X_t^{(i)})\leq \e^{ \beta_i t} \rho (x). \]
  Let $\zeta_n$, $\mathcal{N}_{\mathbf{p}}(\d t,\d z)$, and $(\bar \La_t)$ be defined as those in the beginning of this section. Let $\E^{\mathcal{N}_{\mathbf{p}}}[\,\cdot\,] =\E[\,\cdot\,|\mathscr{F}^{ \mathcal{N}_{\mathbf{p}}}]$ be the conditional expectation w.r.t. the $\sigma$-algebra $\mathscr{F}^{\mathcal{N}_{\mathbf{p}}}=\sigma\{\mathbf{p}(s);\ s\geq 0\}$.
  The mutual independence of $\mathcal{N}_{\mathbf{p}}$ and $(B_t)$ yields that
  \begin{align*}
    \E^{\mathcal{N}_{\mathbf{p}}}[\rho(X_{\zeta_n})]&\leq \E^{\mathcal{N}_{\mathbf{p}}} [\rho(X_{\zeta_{n-1}})]
    +\E^{\mathcal{N}_{\mathbf{p}}} \Big[\int_{\zeta_{n-1}}^{\zeta_n} \!\beta_{\La_{\zeta_{n-1}}}\rho(X_s)\d s\Big].
  \end{align*}
  By Theorem \ref{main-1}, $\beta_{\La_s}\leq \beta_{\bar \La_s}$ a.s.. Furthermore, since $(\bar\La_s)$ depends only on $\mathcal{N}_{\mathbf{p}}$, we have
  \begin{equation}\label{g-12}
  \begin{split}
  \E^{\mathcal{N}_{\mathbf{p}}}[\rho(X_{\zeta_n})]&\leq  \E^{\mathcal{N}_{\mathbf{p}}}[\rho(X_{\zeta_{n-1}})]+\beta_{\bar\La_{\zeta_{n-1}}} \E^{\mathcal{N}_{\mathbf{p}}}\Big[\int_{\zeta_{n-1}}^{\zeta_n}\! \rho(X_s)\d s\Big].
  \end{split}
  \end{equation}
  Then, as $\bar \La_s\equiv \bar \La_{\zeta_{n-1}}$ for $s\in [\zeta_{n-1},\zeta_n)$ by \eqref{g-5},
  \begin{equation}\label{g-13}
    \E^{\mathcal{N}_{\mathbf{p}}} [\rho(X_{\zeta_n})]\leq \e^{\int_{\zeta_{n-1}}^{\zeta_n} \beta_{\bar \La_s}\d s} \E^{\mathcal{N}_{\mathbf{p}}}[\rho(X_{\zeta_{n-1}})].
  \end{equation}
  Setting $m_t:=\sup\{n\in\N;\ \zeta_n\leq t\}$, deducing recursively, we obtain
  \begin{align*}
     \E^{\mathcal{N}_{\mathbf{p}}} [\rho(X_t)]&\leq \e^{\int_{\zeta_{m_t}}^t\! \beta_{\bar \La_s}\d s}\E^{\mathcal{N}_{\mathbf{p}}} [\rho(X_{\zeta_{m_t}})]\\
     &\leq \e^{\int_{\zeta_{m_t}}^t\beta_{\bar \La_s}\d s}\e^{\int_{\zeta_{m_t-1}}^{\zeta_{m_t}} \!\!\beta_{\bar \La_s}\d s} \E^{\mathcal{N}_{\mathbf{p}}} [\rho(X_{\zeta_{m_t-1}})]\\
     &\leq \cdots\leq \e^{\int_0^t\beta_{\bar \La_s}\d s}\rho(x).
  \end{align*}
 Consequently,
 \begin{equation*}
 \E[\rho(X_t)]\leq \E\Big[\e^{\int_0^t\beta_{\bar \La_s}\d s}\Big]\rho(x),
 \end{equation*}
 and further by H\"older's inequality, for $p'\in (0,1]$,
 \begin{equation}\label{g-14}
 \E[\rho^{p'}(X_t)]\leq \E\Big[\e^{\int_0^tp'\beta_{\bar \La_s}\d s}\Big]\rho^{p'}(x).
 \end{equation}
 According to
 \cite[Propositions 4.1, 4.2]{Bar}, \eqref{g-10} yields that there exist  $p'\in (0,1]$, $C,\,\eta >0$ such that
 \[\E\Big[\e^{\int_0^t\!p'\beta_{\bar \La_s}\d s}\Big]\leq C\e^{-\eta t}.\]
 Combining this with \eqref{g-9}, \eqref{g-14}, the desired conclusion follows immediately.
\fin

\section{Perturbation of continuous-time Markov chain}

Given two transition rate matrices $Q=(q_{ij})_{i,j\in\S}$ and $\wt Q=(\tilde q_{ij})_{i,j\in\S}$ on $\S=\{1,2,\ldots,N\}$, $2\leq N\leq \infty$, there are two continuous-time Markov chains $(\La_t)$ and $(\tilde \La_t)$  associated with $Q$ and $\wt Q$ respectively  with $\La_0=\tilde \La_0$, the purpose of this section is to estimate the following quantity
\begin{equation}\label{e-1}
\frac 1t\int_0^t\p(\La_s\neq \tilde \La_s)\d s,\quad t>0,
\end{equation} in terms of the difference between $Q$ and $\wt Q$. The quantity \eqref{e-1} plays important role in the study of regime-switching processes. For instance, in \cite{SY}, it is the key point to show the stability of the process $(X_t)$ under the perturbation of $Q$. In \cite{Sh18}, it is the basis for proving the Euler-Maruyama's approximation of state-dependent regime-switching processes. Also, as mentioned in the introduction, it was used in \cite{SZ21} to study the smooth dependence of initial values for state-dependent \textbf{RSP}s. In this section, we shall improve the estimate of the quantity \eqref{e-1}, and apply it to develop the perturbation theory associated with the regime-switching processes.

In the classical perturbation theory of Markov chains, there are a lot of researches on the upper estimation of the total variation distance
$\|P_t-\wt P_t\|_{\var} $ between the semigroups $P_t$ and $\wt P_t$ associated with $(\La_t)$ and $(\tilde \La_t)$ respectively; see, e.g. \cite{DK, Mi03, Mi04, Mi05} and references therein. For instance, Mitrophanov \cite{Mi03, Mi04} showed that
\begin{equation}\label{e-2}
\|P_t-\wt P_t\|_{\mathrm{var}}\leq \frac{\e\tau_1}{\e-1}\|Q-\wt Q\|_{\ell_1},
\end{equation}
where
\[\beta(t)=\frac 12 \max_{i,j\in\S}\|(e_i-e_j)\exp(tQ)\|_{\ell_1},\ \ {\tau_1}=\inf\{t>0;\ \beta(t)\leq \e^{-1}\}.\] Recall that  the $\ell_1$-norm of a matrix $A=(a_{ij})_{i,j\in\S}$ is defined by $\|A\|_{\ell_1}=\sup_{i\in\S}\sum_{j\in\S}|a_{ij}|$, and the total variation distance between any two probability measures $\mu$ and $\nu$ on $\S$ is defined by
\[\|\mu-\nu\|_{\var}=\sup_{|h|\leq 1}\big|\sum_{i\in\S} h_i(\mu_i-\nu_i)\big|.\]
However, the estimate \eqref{e-2} and its method to establishing it is not applicable to \eqref{e-1}. Moreover, notice that
\[\|P_t-\wt P_t\|_{\var}=2\inf\p(\xi\neq \tilde \xi)\leq 2\p(\La_t\neq \tilde \La_t),\]
where the infimum is over all couplings $(\xi, \tilde \xi)$ with marginal distribution $P_t$ and $\wt P_t$ respectively.
The method in
\cite{DK, Mi03, Mi04} cannot be extended to dealing with \eqref{e-1} because the total variation norm plays an essential role in establishing \eqref{e-2}. In \cite{SY}, we provided an estimate of \eqref{e-1} through constructing a coupling process $(\La_t,\tilde \La_t)$ using Skorokhod's representation, where the intervals were constructed as in \cite{Gho}. Consequently,    \cite{SY} can only cope with jumping processes in a finite state space and the obtained estimate is not satisfactory especially for large time $t$.


In this section, we shall use the following assumptions:
\begin{itemize}
  \item[(H1)] $K_0:=\sup\{q_i,\tilde q_j;\, i,j\in\S\}<\infty$.
  \item[(H2)] There exists a $c_0\in\N$ such that $q_{ij}=\tilde q_{ij}=0$ for all $i,\,j\in\S$ with $|j-i|>c_0$.
\end{itemize}

\begin{mythm}\label{main-2}
Assume that $\mathrm{(H1)}$ and $\mathrm{(H2)}$ hold, then  there exist processes $(\La_t)$ and $(\tilde \La_t)$ such that for all $t>0$
\begin{equation}\label{h-2}
\frac{1}{t}\int_0^t\!\p(\La_s\neq \tilde \La_s)\d s\leq 1-\frac{1}{t\|Q-\wt Q\|_{\ell_1}}\Big(1-\e^{-\|Q-\wt Q\|_{\ell_1}t}\Big),
\end{equation}
which implies   that
\begin{equation}\label{h-2.5}
\frac 1 t\int_0^t\!\p(\La_s\neq \tilde \La_s)\d s \leq \min\Big\{\frac 12\|Q-\wt Q\|_{\ell_1} t, 1\Big\}.
\end{equation}
Also, for all $t>0$,
\begin{equation}\label{h-3}
  \begin{split}
   & \frac{1}{t}\int_0^t\!\p(\La_s\!\neq \!\tilde \La_s)\d s\geq \frac{\inf\limits_{i\in\S}\sum\limits_{j\neq i}|q_{ij}\!-\!\tilde q_{ij}|}{M+\|Q\!-\!\widetilde Q\|_{\ell_1}}\Big( 1\!-\!\frac{1}{(M\!+\!\|Q\!-\!\widetilde Q\|_{\ell_1})t}\big(1\!-\!\e^{-(M\!+\!\|Q\!-\!\widetilde Q\|_{\ell_1})t}\big)\Big),
  \end{split}
  \end{equation}
  where $M=4c_0K_0$.
\end{mythm}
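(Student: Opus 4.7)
My plan is to build one coupling $(\La_t,\tilde\La_t)$ from a common Poisson random measure $\mathcal{N}_{\mathbf p}$ exactly as in Step~2 of Section~2, and to exploit it for both \eqref{h-2} and \eqref{h-3}. The only design choice is how to lay out the intervals $\Gamma_{ij}$ (length $q_{ij}$) and $\tilde\Gamma_{ij}$ (length $\tilde q_{ij}$). I would use the sort-ordered, common-origin layout of Step~1 of Section~2 but aligned across the two matrices so that, for each $(i,j)$, the shared part of length $\min(q_{ij},\tilde q_{ij})$ and the exclusive remainder of length $|q_{ij}-\tilde q_{ij}|$ sit in adjacent positions. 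Then $|\Gamma_{ij}\cap\tilde\Gamma_{ij}|=\min(q_{ij},\tilde q_{ij})$, so at state $(i,i)$ an incoming Poisson atom either jumps both chains to the same new state or lands in a discrepancy piece and splits them.

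\textbf{Upper bound.} With this layout, the rate at which $(\La_t,\tilde\La_t)$ leaves the diagonal from $(i,i)$ is exactly $h_i:=\sum_{j\neq i}|q_{ij}-\tilde q_{ij}|\leq\|Q-\wt Q\|_{\ell_1}$. Setting $\tau=\inf\{t:\La_t\neq\tilde\La_t\}$, this gives $\p(\tau>t)\geq\e^{-\|Q-\wt Q\|_{\ell_1}t}$ and therefore $\p(\La_t\neq\tilde\La_t)\leq 1-\e^{-\|Q-\wt Q\|_{\ell_1}t}$. Time-averaging produces \eqref{h-2}, and \eqref{h-2.5} then follows from the elementary inequality $1-(1-\e^{-x})/x\leq x/2$ together with the trivial bound by~$1$.

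\textbf{Lower bound.} The central estimate is that, when $\La_t\neq\tilde\La_t$, the rate at which the coupled process returns to the diagonal is bounded by $M=4c_0K_0$. From state $(i,j)$ with $i\neq j$ the meeting rate decomposes into three parts: $\La$ jumps to $j$ (rate $q_{ij}$), $\tilde\La$ jumps to $i$ (rate $\tilde q_{ji}$), and a single Poisson atom drives $\La$ and $\tilde\La$ simultaneously into a common state $k$ (rate $\sum_k|\Gamma_{ik}\cap\tilde\Gamma_{jk}|$). Assumption~(H2) restricts the last sum to at most $2c_0+1$ values of $k$, and the common-origin sort-ordered layout keeps each overlap within a controlled slice of $[-K_0,K_0]$, producing the uniform bound by $M$. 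With $\phi(t):=\p(\La_t\neq\tilde\La_t)$ and $f(i,j)=\mathbf 1_{i\neq j}$, Dynkin's formula together with the two rate bounds $Lf(i,i)=h_i\geq\inf_k h_k$ and $Lf(i,j)\geq -M$ for $i\neq j$ gives
\[
\dot\phi(t)\geq \inf_{k\in\S}h_k\,(1-\phi(t))-M\phi(t)\geq \inf_{k\in\S}h_k-\big(M+\|Q-\wt Q\|_{\ell_1}\big)\phi(t),
\]
where the second inequality uses $\inf_k h_k\leq\|Q-\wt Q\|_{\ell_1}$ and $\phi\geq0$ to obtain the clean form in the statement. Integrating this linear inequality with $\phi(0)=0$ and then averaging in $t$ yields \eqref{h-3}.

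The main obstacle will be the joint-jump estimate $\sum_k|\Gamma_{ik}\cap\tilde\Gamma_{jk}|$ at off-diagonal states. A crude bound gives only $K_0$, and extracting the sharper constant $4c_0K_0$ once all three meeting mechanisms are summed requires tracking exactly how the positive- and negative-axis slices of the $\Gamma$-system and the $\tilde\Gamma$-system overlap when $i\neq j$, and then invoking (H2) to cap the number of relevant $k$. This is the step where the concrete choice of Skorokhod intervals, rather than merely their existence, enters the argument, consistent with the philosophy emphasized in the introduction.
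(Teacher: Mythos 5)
Your proposal reaches both bounds by a genuinely different route from the paper. The paper discretizes time with mesh $\delta$, proves one-step estimates $\alpha(\delta)\leq \delta\|Q-\wt Q\|_{\ell_1}+C\delta^2$ and $\p(\La_\delta\neq\tilde\La_\delta)\geq \delta\e^{-H\delta}\inf_i\sum_{j\neq i}|q_{ij}-\tilde q_{ij}|$, propagates them by induction on $k\delta$, integrates, and lets $\delta\downarrow 0$; you instead apply Dynkin's formula to $f=\mathbf{1}_{i\neq j}$ and integrate the resulting linear differential inequalities $\dot\phi\leq \|Q-\wt Q\|_{\ell_1}(1-\phi)$ and $\dot\phi\geq \inf_k h_k-(M+\|Q-\wt Q\|_{\ell_1})\phi$. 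The two arguments rest on the same pair of facts (escape rate from the diagonal equals $\sum_{j}|q_{ij}-\tilde q_{ij}|$ at $(i,i)$; return rate to the diagonal is at most $M$), and your ODE version is cleaner and yields \eqref{h-2} and \eqref{h-3} directly. Two points deserve tightening. First, the interval layout: the literal Section~2 construction places the $\Gamma_{ij}$ consecutively, so the left endpoint of $\Gamma_{ij}$ depends on $q_{i1},\dots,q_{i(j-1)}$, and then $|\Gamma_{ij}\cap\tilde\Gamma_{ij}|=\min(q_{ij},\tilde q_{ij})$ generally fails (e.g.\ $q_{i1}=1$, $\tilde q_{i1}=0$, $q_{i2}=\tilde q_{i2}=1$ makes $\Gamma_{i2}$ and $\tilde\Gamma_{i2}$ disjoint). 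Your word ``aligned'' has to mean what the paper does in \eqref{b-1}--\eqref{b-5}: reserve a fixed slot of width $K_0$ for each target $j$ (this is exactly what (H1) permits), so that $\Gamma_{ij}$ and $\tilde\Gamma_{ij}$ share a left endpoint independently of the other entries; this is also what kills the $N^2$ of \eqref{d-8}, and it should be stated explicitly. Second, the ``main obstacle'' you flag is not one: with any layout, the meeting rate from an off-diagonal state $(i,j)$ is at most the total rate at which anything happens, namely $\m\big(\bigcup_k\Gamma_{ik}\cup\bigcup_k\tilde\Gamma_{jk}\big)\leq q_i+\tilde q_j\leq 2K_0\leq 4c_0K_0=M$, so no bookkeeping of overlaps is needed. (The paper sidesteps simultaneous off-diagonal jumps entirely by putting the source-$n$ intervals in pairwise disjoint blocks $U_n$, whence $\m(U_i\cup U_j)\leq M$; your common-origin layout allows such joint jumps, but they only help the chains meet and the same bound $M$ applies.) With these two clarifications your argument is complete and correct.
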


\begin{myrem}\label{rem-2}
In \cite[Lemma 2.2]{SY}, given two transition rate matrices $Q$ and $\wt Q$ on a finite state space $\S=\{1,2,\ldots,N\}$,   a coupling process $(\La_t,\tilde \La_t)$ was constructed and satisfies
\begin{equation}\label{d-8}
\frac 1t\int_0^t \p(\La_s\neq \tilde \La_s)\d s\leq N^2 t\|Q-\wt Q\|_{\ell_1}.
\end{equation}
There is an important drawback  in \eqref{d-8}, that  is the appearance of $N^2$ on the right-hand side of \eqref{d-8}, which restricts the application of this result to the Markov chains on an infinite state space.   This drawback has been removed in Theorem \ref{main-2}, and further a lower estimate  of $\frac 1t\int_0^t\p(\La_s\neq \tilde \La_s)\d s$ is provided in current work.
\end{myrem}


\subsection{Construction of the coupling process}

In this part, we shall introduce the coupling process $(\La_t,\tilde \La_t)$ on $\S\times \S$ that will be used in the proof of Theorem \ref{main-2}. Similar to Section 2, it is also constructed using the Skorokhod representation theorem.   However, there are several subtle differences to fit the current purpose.

\noindent\textbf{Step 1}.\ \emph{Construction of intervals}\
Due to (H1) and (H2), we let
\begin{equation}\label{b-1}
\Gamma_{1k}=[(k-2)K_0,(k-2)K_0+q_{1k}),\quad \wt \Gamma_{1k}=[(k-2)K_0,(k-2)K_0+\tilde q_{1k})
\end{equation} for $2\leq k\leq c_0+1$,
and
\begin{equation}\label{b-2}
U_1=[0,c_0K_0).
\end{equation}
By (H1),  $q_{1k}\leq K_0$ and $\tilde q_{1k}\leq K_0$, and hence
\[\Gamma_{1k}\bigcap \Gamma_{1j}=\emptyset, \ \  \wt \Gamma_{1k}\bigcap \wt \Gamma_{1j}=\emptyset,\quad k\neq j.
\]
Moreover,
\[  \Gamma_{1k}\subset U_1,\quad  \wt \Gamma_{1k}\subset U_1,\ \ \forall\, 2\leq k\leq c_0+1.\]
For $n\geq 2$, define
\begin{equation}\label{b-5}
{\small\begin{split}
\Gamma_{nk}\!&=\!\Big[2(n\!-\!1)c_0K_0\!-\!(n\!-\!k)K_0, 2(n\!-\!1)c_0K_0\!-\!(n\!-\!k)K_0\!+\!q_{nk}\Big), \ \text{$n\!-\!c_0\!\leq k<n$,}  \\
\Gamma_{nk}\!&=\!\Big[2(n\!-\!1)c_0K_0\!+\!(k\!-\!n\!-\!1)K_0,
2(n\!-\!1)c_0K_0\!+\!(k\!-\!n\!-\!1)K_0\!+ \!q_{nk}\Big),\ \text{$n\!+\!c_0\!\geq k>n$}.
\end{split}}
\end{equation}
Define, similarly, $\wt \Gamma_{nk}$ by replacing $q_{nk}$ with $\tilde q_{nk}$ in \eqref{b-5}.
Let
\begin{equation}\label{b-6}
U_n=\Big[(2n-3)c_0 K_0, (2n-1)c_0 K_0\Big), \quad n\geq 2.
\end{equation} So $\Gamma_{nk}\subset U_n$ and $\wt \Gamma_{nk}\subset U_n$ for all $k,n\in\S$ with $|k-n|\leq c_0$.

Compared with the intervals constructed in \cite{Gho}, the starting point of each interval $\Gamma_{nk}$ constructed above does not depend on any other intervals $\Gamma_{ij}$, $i,\,j\in\S$ with $i\neq n$, $j\neq k$. We construct $\Gamma_{nk}$ in this way in order to remove the term $N^2$ appeared in \eqref{d-8}.

\noindent \textbf{Step 2.} \emph{Construction of Poisson random measure}
Denote by $\m(\d x)$ the Lebesgue measure over the real line $\R$.
Let $\xi_i^{(k)}$, $k,i=1,2,\ldots$, be $U_k$-valued random variables with
\[\p(\xi_i^{(k)}\in \d x)=\frac{\m(\d x)}{\m(U_k)},\]
and $\tau_i^{(k)}$, $k,i=1,2,\ldots$, be non-negative random variables such that
\[\p(\tau_i^{(k)}>t)=\exp[-t\m(U_k)],\quad t\geq 0.\]
Assume all $\xi_i^{(k)}$, $\tau_i^{(k)}$ to be mutually independent.
Denote by $\zeta_n^{(k)}=\tau_1^{(k)}+\ldots+\tau_n^{(k)}$ for $k,n\ge 1$ and $\zeta_0^{(k)}=0$ for $k\ge 1$. We define
\begin{equation}\label{b-7}
\mathcal D_{\tilde {\textbf{p}}}=\bigcup_{k\geq 1}\bigcup_{n\geq 0}\big\{\zeta_n^{(k)} \big\}
\end{equation}
and
\begin{equation}\label{b-8}
\tilde {\mathbf{p}}(t)=\sum_{0\leq s<t} \Delta\tilde{\mathbf{p}}(s),\quad \Delta \tilde{\mathbf{p}}(s)=0 \ \text{for $s\not\in \mathcal D_{\tilde{\mathbf{p}}}$}, \  \Delta\tilde{\mathbf{p}}(\zeta_n^{(k)})=\xi_n^{(k)},\quad k,n=1,2,\ldots.
\end{equation}
Let
\begin{equation}\label{b-9}
\mathcal{N}_{\tilde {\textbf{p}}}([0,t]\times A)=\#\big\{s\in \mathcal D_{\tilde{\mathbf{p}}}; \ 0< s\leq t, \tilde {\textbf{p}}(s)\in A\big\},\ \ t>0, A\in \mathscr{B}([0,\infty)).
\end{equation}
As a consequence, we get a Poisson random process $(\tilde{\mathbf{p}}(t))$ and its associated Poisson random measure $\mathcal{N}_{\tilde {\textbf{p}}}(\d t,\d x)$  with  intensity measure $\d t\,\m(\d x)$.

The construction of $({\tilde {\textbf{p}}}(t))$ is more complicated than that of $(\mathbf{p}(t))$ in   Section 2,  which is caused by the fact that the union of $\Gamma_{nk}$ for $n,k=1,2,\ldots$ may be unbounded.

\noindent\textbf{Step 3.} \emph{Construction of coupling  processes}
Define two functions $\vartheta$, $\tilde \vartheta$ associated with $\Gamma_{ij}$ and $\wt \Gamma_{ij}$, $i,j\in \S$, by
\begin{equation}\label{b-10}
\vartheta(i,z)=\sum_{j\in\S, j\neq i}(j-i)\mathbf{1}_{\Gamma_{ij}}(z),\ \ \tilde \vartheta(i,z)=\sum_{j\in\S, j\neq i}(j-i)\mathbf{1}_{\wt \Gamma_{ij}}(z).
\end{equation}
Then, the desired coupling process $(\La_t,\tilde \La_t)$ is given as the solutions of the following SDEs:
\begin{equation}\label{b-11}
\d \La_t=\int_{[0,\infty)} \vartheta(\La_{t-},z)\mathcal{N}_{\tilde{\mathbf{p}}}(\d t,\d z),\quad \La_0=i_0\in\S,
\end{equation}
and
\begin{equation}\label{b-12}
\d \tilde \La_t=\int_{[0,\infty)} \tilde \vartheta(\tilde \La_{t-},z)\mathcal{N}_{\tilde{\mathbf{p}}}(\d t,\d z),\quad \tilde \La_0=i_0\in\S.
\end{equation}
\begin{myrem}\label{rem-1}
The coupling process $(\La_t,\tilde \La_t)$ constructed above is different to the basic coupling of $(\La_t)$ and $(\tilde \La_t)$(see \cite[p.11]{Chen}). Indeed,
The transition rate matrix $\bar Q=(\bar{q}_{(ij)(k\ell)})_{i,j,k,\ell\in\S}$ of $(\La_t,\tilde \La_t)$ is given as follows: for $i,j,k, r\in\S$, which are different to each other,
\begin{equation}\label{q-c}
\begin{split}
\bar{q}_{(ij)(kr)}&=0,\quad \bar{q}_{(ij)(ik)}=\tilde q_{jk},\quad \bar{q}_{(ij)(kj)}=q_{ik},\\
\bar{q}_{(ii)(jj)}&=q_{ij}\wedge\tilde q_{ij},\quad \bar{q}_{(ii)(ji)}=(q_{ij}-\tilde q_{ij})\!\vee \!0,\quad \bar{q}_{(ii)(ij)}=(\tilde q_{ij}-q_{ij})\!\vee\! 0,
\end{split}
\end{equation} and  $\bar{q}_{(ii)(ii)}=-\!\sum\limits_{(\ell,\ell')\neq (i,i)}\!\!\bar{q}_{(ii)(\ell\ell')}$.

The transition rate matrix of the basic coupling is given by
  \begin{align*}
  q_{(ij)(kj)}&=(q_{ik}-\tilde q_{jk})\vee 0, \quad
  q_{(ij)(ik)} =(\tilde q_{jk}-q_{ik})\vee 0, \\
  q_{(ij)(kk)}&=q_{ik}\wedge \tilde q_{jk},
  \end{align*} for all $i,j,k\in\S$, and $i,j$ not necessary to be different.
\end{myrem}

By the previous construction of $\Gamma_{ij}$, $\wt \Gamma_{ij}$, and  $\mathcal{N}_p(\d t,\d x)$, the following  properties hold:
\begin{itemize}
  \item[(i)] The processes $(\La_t)$ and $(\tilde \La_t)$ could jump only when  the Poisson process $(\tilde{\mathbf{p}}(t))$ jumps.
  \item[(ii)] If $\La_t=\tilde \La_t=k$, for $\delta>0$, $\La_{t+\delta}\neq \tilde \La_{t+\delta}$ may happen only when $\zeta_n^{(k)}\in [t,t+\delta)$ and $\xi_n^{(k)}\in \Gamma_{kj}\Delta\wt \Gamma_{kj}$ for some $n\geq 1$, where $A\Delta B:=(A\backslash B)\cup(B\backslash A)$ for Borel sets $A$, $B$.
\end{itemize}

\subsection{Proof of Theorem \ref{main-2} and its application}\hfill

 \vspace*{0.5em}

 \textbf{Proof of Theorem \ref{main-2}}. \\
(i)\  \emph{Upper estimate} \
  For $\delta>0$, let
  \begin{align}\label{c-2}
  \alpha(\delta)&=\sup_{i\in \S}\p(\La_\delta\neq \tilde \La_\delta|\ \text{$\La_0=\tilde \La_0=i$}),\quad \quad
  \beta(\delta) = 1-\alpha(\delta).
  \end{align}
    Now we use the representation \eqref{b-11} and \eqref{b-12} of $(\La_t)$ and $(\tilde \La_t)$ to estimate $\alpha(\delta)$.
  Noting that  $\La_0=\tilde \La_0=i_0$ for some $i_0\in\S$, by \eqref{b-10}, \eqref{b-11} and \eqref{b-12},   we have
  \begin{equation}\label{c-4}
  \begin{split}
    \p(\La_\delta\neq \tilde \La_\delta)&=\p\big(\La_\delta\neq \tilde \La_{\delta}|\La_0=\tilde \La_0)=\p\big(\La_\delta\neq \tilde \La_\delta,\mathcal{N}_{\tilde{\mathbf{p}}}([0,\delta]\!\times\! U_{i_0})\geq 1\big)\\
    &=\p(\La_\delta\neq \tilde \La_\delta,\mathcal{N}_{\tilde{\mathbf{p}}}([0,\delta]\!\times\! U_{i_0})=1)+\p(\La_\delta\neq \tilde \La_\delta, \mathcal{N}_{\tilde{\mathbf{p}}}([0,\delta]\!\times\! U_{i_0})\geq 2).
  \end{split}
  \end{equation}
  Since  $H:=\m(U_{i_0})=2c_{0}K_0<\infty$, there exists a $C>0$ independent of the choice of $i_0\in\S$ such that
  \[\p\big(\mathcal{N}_{\tilde{\mathbf{p}}}([0,\delta]\!\times\! U_{i_0})\geq 2\big)=1-\e^{-H\delta}-H\delta\e^{-H\delta}\leq C\delta^2.\]
  Moreover,
  \begin{equation}\label{c-5}
  \begin{split}
    &\p(\La_\delta\!\neq \!\tilde \La_\delta, \mathcal{N}_{\tilde{\mathbf{p}}}([0,\delta]\!\times \! U_{i_0})\!=\!1)\\
    &=\!\int_0^\delta\!\p\big(\La_{\delta}\neq \tilde \La_\delta,\tau_1^{(i_0)}\!\in \d s,\tau_2^{(i_0)}\!>\delta-s\big)\\
    &=\int_0^\delta\!\p\big(\xi_1^{(i_0)}\! \in \!\cup_{j\in \S}\big(\Gamma_{i_0j}\Delta \widetilde \Gamma_{i_0j}\big),\tau_1^{(i_0)}\!\in\! \d s,\tau_2^{(i_0)}\!>\delta-s\big) \\
    &=\delta \e^{-H\delta}\sum_{j\in\S,j\neq i_0}|q_{i_0j}-\tilde q_{i_0j}|\\
    &\leq \delta\e^{-H\delta}\|Q-\widetilde Q\|_{\ell_1}\leq \delta \|Q-\widetilde Q\|_{\ell_1}.
  \end{split}
  \end{equation}
  In light of  \eqref{c-4} and \eqref{c-5}, we get
  \begin{equation}\label{c-5.5}
    \p(\La_\delta\neq \tilde \La_\delta)=\p(\La_\delta\neq \tilde \La_\delta|\La_0=\tilde \La_0=i_0)\leq \delta\|Q-\wt Q\|_{\ell_1}+C\delta^2,
  \end{equation}
  and hence
  \begin{equation}\label{c-6.5}
  \begin{split}
  \alpha(\delta)&\leq \delta\|Q-\wt Q\|_{\ell_1}+C\delta^2,\quad
  \beta(\delta)=1-\alpha(\delta) \geq 1-\delta\|Q-\wt Q\|_{\ell_1}-C\delta^2.
  \end{split}
  \end{equation}

  Next, let us consider the time $2\delta$.
  \begin{align*}
    \p(\La_{2\delta}\neq \tilde \La_{2\delta})&=\p(\La_{2\delta}\neq \tilde\La_{2\delta},\La_\delta= \tilde \La_{\delta})+\p(\La_{2\delta}\neq \tilde\La_{2\delta},\La_\delta\neq \tilde \La_\delta)\\
    &=\p(\La_{2\delta}\neq \tilde \La_{2\delta}\big|\La_\delta=\tilde \La_\delta)\p(\La_\delta=\tilde \La_\delta)\\
    &\quad+\p(\La_{2\delta}\neq \tilde \La_{2\delta}\big|\La_\delta\neq \tilde \La_{\delta})\p(\La_\delta\neq \tilde \La_{\delta})\\
    &\leq \alpha(\delta)\p(\La_\delta=\tilde \La_\delta)+\p(\La_\delta\neq \tilde \La_\delta)\\
    &= \alpha(\delta) +(1-\alpha(\delta))\p(\La_\delta\neq \tilde \La_\delta)\\
    &\leq \alpha(\delta)(1+\beta(\delta)),
  \end{align*} where we have used the time-homogeneity of Markov chain $(\La_t,\tilde \La_t)$ to get the  estimate
  $\dis \p(\La_{2\delta}\!\neq\! \tilde \La_{2\delta}\big|\La_{\delta}\!=\!\tilde \La_{\delta})\!\leq \! \alpha(\delta)$.
  Analogously, using the time-homogeneity of $(\La_t)$ and $(\tilde \La_t)$,
  \begin{align*}
    \p(\La_{3\delta}\neq \tilde \La_{3\delta})&\leq \p(\La_{3\delta}\neq \tilde\La_{3\delta}\big|\La_{2\delta}=\tilde \La_{2\delta})\p(\La_{2\delta}= \tilde \La_{2\delta})+\p(\La_{2\delta}\neq \tilde \La_{2\delta})\\
    &\leq \alpha(\delta)+ \beta(\delta) \p(\La_{2\delta} \neq \tilde\La_{2\delta}) \leq \alpha(\delta)(1+\beta(\delta)+\beta(\delta)^2)\\
    &=1-\beta(\delta)^3.
  \end{align*}
Deducing inductively, we get
  \begin{equation}\label{c-6}
    \p(\La_{k\delta}\neq \tilde \La_{k\delta})\leq \alpha(\delta)\sum_{m=1}^k\beta(\delta)^{m-1}
    = 1-\beta(\delta)^{k}   \quad \text{for}\ k\geq 4.
  \end{equation}
  Therefore, for $t>0$, setting $K(t)=\big[\frac{t}{\delta}\big]$, $t_k=k\delta$ for $k\leq K$ and $t_{K+1}=t$, by  \eqref{c-6.5},
  \begin{align*}
    &\int_0^t\p(\La_s\neq \tilde \La_s)\d s\\
    &\leq \sum_{k=0}^{K(t)}\! \int_{t_k}^{t_{k+1}}\! \!\!\big( \p(\La_s\neq \!\tilde \La_s,\La_{k\delta}\!=\!\tilde \La_{k\delta})+\p( \La_s\!\neq\! \tilde \La_s,\La_{k\delta}\!\neq \!\tilde \La_{k\delta})\big)\d s\\
    &\leq \sum_{k=0}^{K(t)} \int_{t_k}^{t_{k+1}} \!\!\p(\La_s\neq \tilde \La_s|\La_{k\delta}\neq \tilde \La_{k\delta} )\d s+  \sum_{k=0}^{K(t)}\!\int_{t_k}^{t_{k+1}}\!\p (\La_{k\delta}\neq \tilde \La_{k\delta})\d s\\
    &\leq \alpha(\delta) t+\delta\big(K(t)+1-\sum_{k=0}^{K(t)} \beta(\delta)^k\big)\\
    &\leq \alpha(\delta) t+\delta(K(t)+1)-\delta\sum_{k=0}^{K(t)}\! \big(1-\delta\|Q-\wt Q\|_{\ell_1}-C\delta^2\big)^k\\
    &\leq \alpha(\delta)t +\delta(K(t)+1)-\frac{1-\big(1-\delta\|Q -\wt Q\|_{\ell_1} -C \delta^2 \big)^{K(t)+1}} {\|Q-\wt Q\|_{\ell_1}+C\delta}.
    \end{align*}
  Letting  $\delta\downarrow 0$ and then dividing both sides by $t$,  we get the upper estimate
  \[\frac{1}{t}\int_0^t\!\p(\La_s\neq \tilde \La_s)\d s\leq 1-\frac{1}{t\|Q-\wt Q\|_{\ell_1}}\Big(1-\e^{-\|Q-\wt Q\|_{\ell_1}t}\Big)\leq 1.\]
  Using the inequality $\e^{-x}\leq 1-x+\frac 12 x^2$ for $x\geq 0$, we further get
  \[\frac{1}{t}\int_0^t\!\p(\La_s\neq \tilde \La_s)\d s\leq \min\Big\{\frac 12 \|Q-\wt Q\|_{\ell_1} t, 1\Big\}.\]
  Therefore, the upper estimates \eqref{h-2} and \eqref{h-2.5} have been proved.

\noindent (ii)\ \emph{Lower estimate}
We will estimate the difference by induction.

Due to \eqref{c-4},
  \begin{equation}\label{d-4}
  \begin{split}
  \p(\La_\delta\neq \tilde \La_\delta)&=\p(\La_\delta\neq \tilde \La_\delta|\La_0=\tilde \La_0)\geq \p(\La_{\delta}\neq \tilde \La_{\delta}, N_{\tilde{\mathbf{p}}}([0,\delta]\!\times\! U_{i_0})=1)\\
  &=\delta \e^{-H\delta}\sum_{j\neq i}|q_{i_0j}-\tilde q_{i_0j}|\\ &
  \geq \delta\e^{-H\delta}\inf_{i\in \S}\sum_{j\neq i}|q_{ij}-\tilde q_{ij}|
  =:\tilde \alpha(\delta).
  \end{split}
  \end{equation}
Then
  \begin{align*}
    &\p(\La_{2\delta}\neq \tilde \La_{2\delta})\\
    &=\p(\La_{2\delta}\neq \tilde \La_{2\delta},\La_{\delta}=\tilde\La_{\delta})+\p(\La_{2\delta}\neq \tilde \La_{2\delta},\La_\delta\neq \tilde \La_{\delta})\\
    &=\p(\La_{2\delta}\neq \tilde \La_{2\delta}\big|\La_{\delta}=\tilde \La_{\delta})(1-\p(\La_{\delta}\neq \tilde \La_{\delta}))\\
    &\quad +\p(\La_\delta\neq \tilde \La_{\delta})-\p(\La_{2\delta}=\tilde \La_{2\delta}\big|\La_{\delta}\neq \tilde \La_{\delta})\p(\La_\delta\neq \tilde \La_{\delta})\\
    &\geq \p(\La_{2\delta}\neq \La_{2\delta}\big|\La_\delta=\tilde \La_\delta)(1-\p(\La_{\delta}\neq \tilde \La_{\delta}))+\p(\La_\delta\neq \tilde \La_{\delta})\\
    &\quad-\p\big(\text{$\exists$\ jumps for $(\tilde{\mathbf{p}}(t))$ during $[\delta,2\delta)$ localizing in $U_{\La_\delta}\cup U_{\tilde \La_\delta}$}\big)\p(\La_\delta\neq \tilde \La_{\delta})\\
    &\geq \p(\La_{2\delta}\neq \La_{2\delta}\big|\La_\delta=\tilde \La_\delta)(1-\p(\La_{\delta}\neq \tilde \La_{\delta}))\\
    &\quad+\p(\La_\delta\neq \tilde \La_{\delta})-\big(1-\e^{-M\delta}\big)\p(\La_\delta\neq \tilde \La_\delta)\\
    &\geq \tilde \alpha(\delta)(1-\alpha(\delta))+\e^{-M\delta}\tilde \alpha(\delta)\\
    &\geq\tilde \alpha(\delta)+(\e^{-M\delta}-\delta\|Q-\widetilde Q\|_{\ell_1}-C\delta^2)\tilde \alpha(\delta),
  \end{align*}
  where we have used \eqref{c-5.5} and
  \[\p\big(N_{\tilde{\mathbf{p}}}\big([\delta, 2\delta]\!\times\!\big(U_{\La_\delta}\cup U_{\tilde \La_{\delta}}\big)\big)\geq 1\big)\leq 1- \e^{-M\delta},\] since $\m(U_{i}\cup U_{j})\leq M=4c_0K_0$ for all $i,\,j\in\S$.
  Setting \[\gamma(\delta)=\e^{-M\delta}-\delta\|Q-\widetilde Q\|_{\ell_1}-C\delta^2,\ \text{which is positive when $\delta$ is small enough},\] we rewrite the previous estimate in the form
  \begin{equation}\label{d-5}
  \p(\La_{2\delta}\neq \tilde \La_{2\delta})\geq \tilde \alpha(\delta)+\gamma(\delta)\tilde \alpha(\delta).
  \end{equation}
Repeating this procedure, we obtain that
  \begin{equation}\label{d-6}
  \p(\La_{k\delta}\neq \tilde \La_{k\delta})\geq \tilde \alpha(\delta)\sum_{m=1}^{k}\gamma(\delta)^{m-1},\quad k\geq 3.
  \end{equation}
Then, by \eqref{d-6},  for $K\in \N$,
  \begin{align*}
    \int_0^{K\delta}\!\!\!\p(\La_s\!\neq\! \tilde \La_s)\d s&=\sum_{k=0}^{K }\int_{k\delta}^{(k+1)\delta} \!\!\big(\p(\La_s\!\neq\!\tilde \La_s,\La_{k\delta}\!\neq \!\tilde \La_{ k\delta})+ \p(\La_s\!\neq\! \tilde \La_s,\La_{k\delta}= \tilde \La_{k\delta})\big)\d s\\
    &\geq \sum_{k=0}^{K }\int_{k\delta}^{(k+1)\delta} \!\! \big(\p(\La_{k\delta}\neq \tilde \La_{k\delta})-\p(\La_{k\delta}\neq \tilde \La_{k\delta}, \La_s=\tilde \La_s)\big)\d s\\
    &\geq \delta\sum_{k=1}^{K}\p(\La_{k\delta}\neq \tilde \La_{k\delta})-\delta(1-\e^{-M\delta}) \sum_{k=1}^{K}\p(\La_{k\delta}\neq \tilde \La_{k\delta})\\
    &\geq \delta \e^{-M\delta}\sum_{k=1}^K\tilde \alpha(\delta)\frac{1-\gamma(\delta)^k}{1-\gamma(\delta)}\\
    &=\e^{-M\delta}\frac{\delta\tilde \alpha(\delta)}{1-\gamma(\delta)}\big(K-\frac{\gamma(\delta)(1-\gamma(\delta)^K)}
    {1-\gamma(\delta)}\big).
  \end{align*}
  Since
  \[\lim_{\delta\downarrow 0}\frac{\tilde \alpha(\delta)}{1-\gamma(\delta)}=\frac{\inf\limits_{i\in\S}\sum\limits_{j\neq i}|q_{ij}-\tilde q_{ij}|}{M+\|Q-\widetilde Q\|_{\ell_1}},\qquad \lim_{\delta\downarrow 0} \gamma(\delta)^{\frac{t}{\delta}}=\e^{-(M+\|Q-\widetilde Q\|_{\ell_1}) t},  \]
  by taking $K=\big[\frac{t}{\delta}\big]$  in the previous estimation and letting $\delta$ downward to $0$, we finally get
  \begin{equation}\label{d-7}
  \int_0^t\p(\La_s\neq \tilde\La_s)\d s\geq \frac{\inf\limits_{i\in\S}\sum\limits_{j\neq i}|q_{ij}-\tilde q_{ij}|}{M+\|Q-\widetilde Q\|_{\ell_1}}\Big( t-\frac{1}{M\!+\!\|Q\!-\!\widetilde Q\|_{\ell_1}}\big(1-\e^{-(M\!+\!\|Q\!-\!\widetilde Q\|_{\ell_1})t}\big)\Big),
  \end{equation}
  which is the desired lower estimate. The proof is complete.\fin

Next, we consider the application of Theorem \ref{main-2}. First, let us consider its application to the perturbation theory on the invariant probability measures of continuous time Markov chains on infinite state spaces. There were many works on the perturbation theory of Markov chains, such as, \cite{Mi05}, \cite{ZI94} and references therein. We refer the readers to the recent review paper Zeifman, Korolev and Satin \cite{ZKS} for more discussions on this topic.

Let $P_t$ and $\wt P_t$ denote the semigroup w.r.t.\! the  transition rate matrix $Q$ and $\wt Q$ respectively. Assume that there exist invariant probability measures $\pi=(\pi_i)_{i\in\S}$ and $\tilde \pi=(\tilde \pi_i)_{i\in\S}$ associated respectively with $P_t$ and $\wt P_t$, i.e.
\[\pi P_t=\pi,\qquad \tilde \pi \wt P_t=\tilde   \pi.\]

\begin{mycor}\label{cor-1}
Assume  $\mathrm{(H1)}$ and  $\mathrm{(H2)}$ hold. Suppose that there exists a function $\eta:[0,\infty)\to [0,2]$ satisfying $\int_0^\infty \eta_s\d s<\infty$ such that for some $i_0\in\S$
\[\|P_t(i_0,\cdot)-\pi\|_{\var}\leq \eta_t,\quad \|\wt P_t(i_0,\cdot)-\tilde \pi\|_{\var}\leq \eta_t,\quad t\geq 0.\]
Then, it holds
\begin{equation}\label{p-1}
\|\pi-\tilde \pi\|_{\var}\leq 2\sqrt 2\Big( \int_0^\infty \eta_s\d s\Big)^{\frac 12}\sqrt{\|Q-\wt Q\|_{\ell_1}}.
\end{equation}
\end{mycor}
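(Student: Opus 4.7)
The plan is to bound $\|\pi-\tilde\pi\|_{\var}$ by comparing both measures to the respective semigroups evaluated at the common starting point $i_0$, then averaging in time so that Theorem \ref{main-2} can be brought to bear, and finally optimizing over the time horizon.

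First, I would use the triangle inequality: for every $t\geq 0$,
\begin{equation*}
\|\pi-\tilde\pi\|_{\var}\leq \|\pi-P_t(i_0,\cdot)\|_{\var}+\|P_t(i_0,\cdot)-\wt P_t(i_0,\cdot)\|_{\var}+\|\wt P_t(i_0,\cdot)-\tilde\pi\|_{\var}.
\end{equation*}
The first and third terms are each $\leq \eta_t$ by hypothesis. For the middle term, I would use the coupling $(\La_t,\tilde\La_t)$ constructed in Section 3.1 with $\La_0=\tilde\La_0=i_0$; since $P_t(i_0,\cdot)$ and $\wt P_t(i_0,\cdot)$ are the marginal laws of $\La_t$ and $\tilde\La_t$, the definition of total variation gives
\begin{equation*}
\|P_t(i_0,\cdot)-\wt P_t(i_0,\cdot)\|_{\var}\leq 2\,\p(\La_t\neq \tilde\La_t).
\end{equation*}

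The quantity $\p(\La_t\neq \tilde\La_t)$ is not directly controlled for a single $t$; what Theorem \ref{main-2} controls is its time average. So I would average the above inequality over $t\in[0,T]$ for an arbitrary $T>0$: since the left-hand side does not depend on $t$,
\begin{equation*}
\|\pi-\tilde\pi\|_{\var}\leq \frac{2}{T}\int_0^T\!\eta_t\,\d t+\frac{2}{T}\int_0^T\!\p(\La_t\neq\tilde\La_t)\,\d t\leq \frac{2}{T}\int_0^\infty\!\eta_t\,\d t+\|Q-\wt Q\|_{\ell_1}\,T,
\end{equation*}
where I used $\int_0^T\eta_t\d t\leq \int_0^\infty\eta_t\d t$ for the first average and estimate \eqref{h-2.5} for the second (picking the linear branch of the min).

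The final step is to optimize the free parameter $T$. Setting $I:=\int_0^\infty\eta_s\d s$, the right-hand side $\tfrac{2I}{T}+\|Q-\wt Q\|_{\ell_1}T$ is minimized at $T^\ast=\sqrt{2I/\|Q-\wt Q\|_{\ell_1}}$, yielding the value $2\sqrt{2I\,\|Q-\wt Q\|_{\ell_1}}=2\sqrt{2}\,\sqrt{I}\,\sqrt{\|Q-\wt Q\|_{\ell_1}}$, which is exactly the bound in \eqref{p-1}. One should briefly check the edge case $\|Q-\wt Q\|_{\ell_1}=0$ (trivial, both chains agree), and note that if the optimal $T^\ast$ forces the middle term above $2$, the bound $2$ from the min in \eqref{h-2.5} is already weaker than the claimed estimate, so no issue arises. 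I expect no real obstacle here: the only subtlety is recognizing that the time-averaged coupling bound of Theorem \ref{main-2} is the right quantity to feed into a triangle inequality applied uniformly in $t$, and then balancing the two competing contributions via the square-root trade-off.
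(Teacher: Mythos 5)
Your proof is correct and follows essentially the same route as the paper: the paper also compares $\pi$ and $\tilde\pi$ through the time-averaged semigroups at $i_0$ (phrased via test functions $h$ with $|h|_\infty\le 1$ rather than via the total variation norm directly), invokes the coupling bound \eqref{h-2.5} to control the middle term by $\|Q-\wt Q\|_{\ell_1}t$, and optimizes over $t$ at exactly the same point $t=\sqrt{2\int_0^\infty\eta_s\,\d s/\|Q-\wt Q\|_{\ell_1}}$. The only cosmetic difference is that you apply the triangle inequality pointwise in $t$ and then average, whereas the paper averages first and then applies the triangle inequality; the resulting bound $\tfrac{2}{t}\int_0^\infty\eta_s\,\d s+\|Q-\wt Q\|_{\ell_1}t$ is identical.
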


\begin{proof}
  For any bounded function $h$ on $\S$ with $|h|_\infty:=\sup_{i\in\S} |h_i|\leq 1$, it holds that
  \begin{align*}
    &|\pi(h)-\tilde \pi(h)|=\big|\sum_{i\in\S} \pi_i h_i-\sum_{i\in\S}\tilde \pi_i h_i\big|\\
    &\leq \big|\pi(h)-\frac 1t\!\int_0^t\! P_sh(i_0)\d s\big|\!+\!\big|\tilde \pi(h)\!-\!\frac 1t\!\int_0^t\!\wt P_s h(i_0)\d s\big|\!+\!\big|\frac 1t \int_0^t \!P_s h(i_0)\!-\!\wt P_s h(i_0)\d s\big|\\
    &\leq \frac 1t \int_0^t|P_s h(i_0)-\pi(h)|\d s+\frac 1t \int_0^t\!|\wt P_s h(i_0)-\tilde \pi(h)|\d s +\|Q-\wt Q\|_{\ell_1} t\\
    &\leq \frac 2 t\int_0^\infty \eta_s\d s+\|Q-\wt Q\|_{\ell_1} t,\qquad \forall t>0,
  \end{align*}
  where we have used \eqref{h-2.5} in Theorem \ref{main-2}. By  taking $t=\Big(2\int_0^\infty \eta_s\d s/\|Q-\wt Q\|_{\ell_1}\Big)^{1/2}$, we arrive at
  \[\|\pi-\tilde \pi\|_{\var}=\sup_{|h|\leq 1} |\pi(h)-\tilde\pi(h)|\leq 2\sqrt 2\Big( \int_0^\infty \eta_s\d s\Big)^{\frac 12}\sqrt{\|Q-\wt Q\|_{\ell_1}},
  \]
  which is the desired conclusion.
\end{proof}

\begin{myrem}
  When $\S$ is a finite state space, the stability of $\pi$ in terms of the perturbation of $Q$ has been studied in Freidlin and Wentzell \cite{FW} and Faggionato et al. \cite{FGC}. \cite{FW} proved it through expressing $\pi$ as a polynomial of transition probabilities. This result was applied in \cite{BDG18} to establish the averaging principle for multiple time-scale systems. \cite{FGC} proved a similar result by Perron-Frobenius Theorem to express $\pi$ in terms of a nonzero right eigenvector of $Q$-matrix with eigenvalue 0.
\end{myrem}

Second, we can apply Theorem \ref{main-2} to improve all the main results  \cite[Theorems 1.1-1.4]{SY}. Here we only state the improvement of \cite[Theorem 1.1]{SY} to save space.

Consider the regime-switching system $(X_t,\La_t)$ satisfying
\begin{equation}\label{d-9}
\d X_t=b(X_t,\La_t)\d t+\sigma(X_t,\La_t)\d B_t,\quad X_0=x_0\in\R^d,\ \La_0=i_0\in\S,
\end{equation}
and $(\La_t)$ is a Markov chain on $\S=\{1,2,\ldots,N\}$, $2\leq N\leq \infty$.
In realistic application, sometimes one can only get an estimation $\wt Q$ of the original transition rate matrix $Q$ of $(\La_t)$. $\wt Q$ determines another Markov chain $(\tilde \La_t)$. Correspondingly, the studied system $(X_t)$ turns into $(\wt X_t)$ satisfying
\[\d \wt X_t=b(\wt X_t,\tilde \La_t)\d t+\sigma(\wt X_t,\tilde \La_t)\d B_t,\ \ \wt X_0=x_0,\ \tilde \La_0=i_0.
\]
It is necessary to measure the difference between $X_t$ and $\wt X_t$ caused by the difference between $Q$ and $\wt Q$. We shall characterize the difference between $X_t$ and $\wt X_t$ via the Wasserstein distance between their distributions.

For any two probability measures $\mu$, $\nu$ on $\R^d$, define the $L_2$-Wasserstein distance between them by
\[W_2(\mu,\nu)^2=\inf_{\pi\in\mathscr{C}(\mu,\nu)}\Big\{\int_{\R^d\!\times\!\R^d}\!\!|x-y|^2\pi(\d x,\d y)\Big\},
\]
where $\mathscr{C}(\mu,\nu)$ denotes the set of all the couplings of $\mu$, $\nu$ on $\R^d\times \R^d$.

\begin{mycor}\label{tt-1}
Assume that $\mathrm{(A1)}$, $\mathrm{(A2)}$, $\mathrm{(H1)}$, and $\mathrm{(H2)}$ hold. Denote by $\mathcal{L}(X_t)$ and $\mathcal{L}(\wt X_t)$ the distribution of $X_t$ and $\wt X_t$ respectively. Then
\begin{equation}\label{d-10}
W_2(\mathcal{L}(X_t),\mathcal{L}(\wt X_t))^2\leq C(p,t)\big(\|Q-\wt Q\|_{\ell_1}\big)^{(p-1)/p},  \quad t>0,
\end{equation}
where $p>1$ and $C(p,t)$ is a positive constant depending only on $p$ and $t$.
\end{mycor}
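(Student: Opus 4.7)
The plan is to construct $(X_t, \widetilde X_t)$ on a common probability space by driving both SDEs with the same Brownian motion $(B_t)$ and by coupling the switching components $(\Lambda_t, \widetilde \Lambda_t)$ exactly via the construction in Section 3 (Steps 1--3 preceding Theorem \ref{main-2}). Under this coupling, $\mathcal{L}(X_t, \widetilde X_t)$ furnishes an admissible transport plan, so
\begin{equation*}
W_2(\mathcal{L}(X_t),\mathcal{L}(\widetilde X_t))^2 \leq \E|X_t - \widetilde X_t|^2,
\end{equation*}
and it suffices to bound the right-hand side.

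First I would write, via It\^o's formula,
\begin{equation*}
\E|X_t-\widetilde X_t|^2 = \E\int_0^t \Big(2\la X_s-\widetilde X_s, b(X_s,\La_s)-b(\widetilde X_s,\widetilde\La_s)\raa + \|\sigma(X_s,\La_s)-\sigma(\widetilde X_s,\widetilde\La_s)\|_{\mathrm{HS}}^2\Big)\d s,
\end{equation*}
and then split each difference in the usual way, e.g.\
$b(X_s,\La_s)-b(\widetilde X_s,\widetilde\La_s) = [b(X_s,\La_s)-b(\widetilde X_s,\La_s)] + [b(\widetilde X_s,\La_s)-b(\widetilde X_s,\widetilde\La_s)]$.
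The Lipschitz part of (A1) absorbs the first bracket into a term of the form $C\E|X_s-\widetilde X_s|^2$, while the second bracket is supported on $\{\La_s\neq \widetilde\La_s\}$ and is dominated (by linear growth from (A2)) by $C(1+|X_s|+|\widetilde X_s|)\mathbf{1}_{\{\La_s\neq \widetilde\La_s\}}$. The same splitting applies to $\sigma$. This yields
\begin{equation*}
\E|X_t-\widetilde X_t|^2 \leq C\int_0^t \E|X_s-\widetilde X_s|^2\d s + C\int_0^t \E\bigl[(1+|X_s|^2+|\widetilde X_s|^2)\mathbf{1}_{\{\La_s\neq \widetilde\La_s\}}\bigr]\d s.
\end{equation*}

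Next, for any $p>1$, H\"older's inequality with conjugate exponents $p$ and $p/(p-1)$ gives
\begin{equation*}
\E\bigl[(1+|X_s|^2+|\widetilde X_s|^2)\mathbf{1}_{\{\La_s\neq\widetilde\La_s\}}\bigr] \leq \bigl(\E(1+|X_s|^2+|\widetilde X_s|^2)^p\bigr)^{1/p}\bigl(\p(\La_s\neq\widetilde\La_s)\bigr)^{(p-1)/p}.
\end{equation*}
Standard moment estimates for SDEs with Lipschitz, linear-growth coefficients (consequence of (A1), (A2)) bound $\E(1+|X_s|^2+|\widetilde X_s|^2)^p$ by a constant $K(p,t)$ on $[0,t]$. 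Integrating in $s$ and applying H\"older once more to the time variable,
\begin{equation*}
\int_0^t \bigl(\p(\La_s\neq\widetilde\La_s)\bigr)^{(p-1)/p}\d s \leq t^{1/p}\Big(\int_0^t\p(\La_s\neq\widetilde\La_s)\d s\Big)^{(p-1)/p}.
\end{equation*}
Now I invoke Theorem \ref{main-2}, specifically \eqref{h-2.5}, to get $\int_0^t\p(\La_s\neq\widetilde\La_s)\d s \leq \tfrac{1}{2}\|Q-\widetilde Q\|_{\ell_1}t^2$, whence the non-Gronwall term is at most $C_1(p,t)\|Q-\widetilde Q\|_{\ell_1}^{(p-1)/p}$. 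Finally Gronwall's inequality applied to $\E|X_t-\widetilde X_t|^2$ converts this into the desired bound, with the exponential in $t$ absorbed into $C(p,t)$.

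The routine parts are the It\^o/H\"older chain and the Gronwall step; the substantive input is exactly \eqref{h-2.5}, which is what allows the estimate to be dimension-free in $\S$ (hence the statement handles $N=\infty$). The only delicate point is keeping track of which $p$-moments of the solutions one needs and matching them against the H\"older exponent $(p-1)/p$; this is why the bound is stated with a general $p>1$ rather than $p=2$, since the moment bound on $(1+|X_s|^2+|\widetilde X_s|^2)^p$ requires (A1), (A2) to deliver $L^{2p}$ control on the solutions, which standard arguments provide.
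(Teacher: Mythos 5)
Your proposal is correct and is essentially the argument the paper intends: the paper's proof is a one-line reference to the scheme of \cite[Theorem 1.1]{SY} (synchronous coupling, It\^o plus the Lipschitz/linear-growth splitting, H\"older with exponents $p$ and $p/(p-1)$, then Gronwall), with the sole substitution of the bound \eqref{h-2.5} for the old $N^2$-dependent estimate of \cite[Lemma 2.2]{SY}. You have correctly identified \eqref{h-2.5} as the one substantive new input and filled in the routine steps the paper omits.
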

\begin{proof}
  This result can be proved along the line of \cite[Theorem 1.1]{SY} by replacing the upper bound of \cite[Lemma 2.2]{SY} with the upper bound given in Theorem \ref{main-2}. The constant $C(p,t)$ can be explicitly expressed as in \cite{SY}.
\end{proof}

\end{document}